\newcommand{\Bignorm}[1]{\Bigl\|#1\Bigr\|}
\newcommand{\bignorm}[1]{\bigl\lVert#1\bigr\rVert}
\newcommand{\bC}{{\mathbb C}}
\newcommand{\Rst}{{\mathbb R}}
\newcommand{\RR}{{\mathbb R}}
\newcommand{\Rdst}{{\Rst^d}}
\newcommand{\Rtdst}{{\Rst^{2d}}}
\newcommand{\set}[2]{\big\{ \, #1 \, \big| \, #2 \, \big\}}
\newcommand{\norm}[1]{\lVert#1\rVert}
\newcommand{\Esp}{{E}}
\newcommand{\Zst}{{\mathbb Z}}
\newcommand{\Zdst}{{\Zst^d}}
\newcommand{\Lsp}{{L}}
\newcommand{\Ltsp}{{\Lsp^2}}
\newcommand{\LtRd}{{\Ltsp(\Rst^d)}}
\newcommand{\Lpsp}{{\Lsp^p}}
\newcommand{\LpRd}{{\Lpsp(\Rst^d)}}
\def\into{\hookrightarrow}
\newcommand{\ip}[2]{\ensuremath{\left<#1,#2\right>}}
\newcommand{\sett}[1]{\ensuremath{\left \{ #1 \right \}}}
\newcommand{\abs}[1]{\ensuremath{\left| #1 \right| }}
\newcommand{\Linf}{{L^\infty}}
\newcommand{\LinfRd}{{\Linf(\Rdst)}}
\newcommand{\Qb}{{[0,1/\beta)^d}}
\newcommand{\LpQb}{{L^p(\Qb)}}
\newcommand{\Lp}{L^p}
\newcommand{\LRd}{{L^1(\Rdst)}}
\newcommand{\latt}{\ensuremath{{\alpha \Zdst \times \beta \Zdst}}}
\newcommand{\latti}{\ensuremath{{\alpha_i \Zdst \times \beta_i \Zdst}}}
\newcommand{\winpqv}{{W(L^p,L^q_v)}}
\newcommand{\winpqvdual}{{W(L^{p'},L^{q'}_{1/v})}}
\newcommand{\winw}{{W(L^\infty,L^1_w)}}
\newcommand{\winwz}{{W(C_0,L^1_w)}}
\newcommand{\koethetoppqv}{\sigma(\winpqv, \winpqvdual)}
\newcommand{\wininfqvdual}{{W(L^{1},L^{q'}_{1/v})}}
\newcommand{\opAw}{{\mathcal{A}_w}}
\newcommand{\opAwE}{{\mathcal{A}_w}(E)}
\newcommand{\apw}{AP_w^p(\rep)}
\newcommand{\coef}[1]{{C_{#1}}}
\newcommand{\rec}[1]{{R_{#1}}}
\newcommand{\frameop}[1]{{S_{#1}}}
\newcommand{\coefgl}{\coef{g,\Lambda}}
\newcommand{\coefgili}{\coef{g^i,\Lambda^i}}
\newcommand{\recgl}{\rec{g,\Lambda}}
\newcommand{\rechl}{\rec{h,\Lambda}}
\newcommand{\recgili}{\rec{g^i,\Lambda^i}}
\newcommand{\seqpqv}{{S^{p,q}_v}}
\newcommand{\seqinfqv}{{S^{+\infty,q}_v}}
\newcommand{\seqpqvl}{{\seqpqv(\Lambda)}}
\newcommand{\seqpqvli}{{\seqpqv(\Lambda^i)}}
\newcommand{\seqpqvlo}{{\seqpqv(\Lambda^1)}}
\newcommand{\seqpqvln}{{\seqpqv(\Lambda^n)}}
\newcommand{\gab}{{\mathcal{G}}}
\newcommand{\opgab}{\frameop{\gab}}
\newcommand{\opgabi}{\frameop{g^i,\Lambda^i}}
\newcommand{\opgabo}{\frameop{g^1,\Lambda^1}}
\newcommand{\opgabn}{\frameop{g^n,\Lambda^n}}
\newcommand{\opgabinv}{\frameop{\gab}^{-1}}
\newcommand{\coefgab}{\coef{\gab}}
\newcommand{\recgab}{\rec{\gab}}
\newcommand{\Fou}{{\mathcal{F}}}
\newcommand{\Mou}{{\mathcal{M}}}
\newcommand{\Nou}{{\mathcal{N}}}
\newcommand{\consv}{C_v}
\newcommand{\gabpartsums}{{R}}
\newcommand{\gabregpartsums}{{\sigma}}
\newcommand{\tilg}{\widetilde{g}}
\newcommand{\gili}{g^i_{\lambda^i}}
\newcommand{\tilgili}{\tilde{g}^i_{\lambda^i}}
\newcommand{\gilikj}{g^i_{(\alpha_i k,\beta_i j)}}
\newcommand{\tilgilikj}{\tilde{g}^i_{(\alpha_i k,\beta_i j)}}
\newcommand{\regwegjM}{r_{j,M}}
\newcommand{\regwegbijM}{r_{\beta_i j,M}}
\newcommand{\BLtRd}{{B(\LtRd)}}
\newcommand{\lwLinfRd}{{\ell^1_w(\Rdst,\LinfRd)}}
\newcommand{\regM}{\sigma_M}
\newcommand{\rep}{\rho}
\newcommand{\eps}{{\varepsilon}}
\newcommand{\Rdc}{{\hat{R}_c^d}}
\newtheorem{theo}{Theorem}
\newtheorem{lemma}{Lemma}
\newtheorem{coro}{Corollary}
\newtheorem{prop}{Proposition}
\newtheorem{remark}{Remark}
\title{Multi-window Gabor frames in amalgam spaces}
\subjclass[2000]{Primary 42C15; Secondary 42A65, 47B38}
\keywords{Wiener amalgam space, Gabor frame, Wiener's Lemma}
\author[R.~Balan]{Radu~Balan}
\address{Department of Mathematics \\
University of Maryland\\
College Park, MD 20742, USA}
\email[Radu~Balan]{rvbalan@math.umd.edu}
\author[J.~Christensen]{Jens~G.~Christensen} 
\address{Department of Mathematics \\
Colgate University\\
Hamilton, NY 13346, USA}
\email[Jens~G.~Christensen]{jchristensen@colgate.edu}
\author[I.~Krishtal]{Ilya~A.~Krishtal}
\address{Department of Mathematical Sciences \\
Northern Illinois University \\
DeKalb, IL 60115, USA}
\email[Ilya~A.~Krishtal]{krishtal@math.niu.edu}
\author[K.~Okoudjou]{Kasso~A.~Okoudjou}
\address{Department of Mathematics \\
University of Maryland\\
College Park, MD 20742, USA}
\email[Kasso~A.~Okoudjou]{kasso@math.umd.edu}
\author[J.~L.~Romero]{Jos\'e Luis Romero}
\address{Faculty of Mathematics, University of Vienna \\ Oskar-Morgenstern-Platz 1, A-1090 Wien, Austria}
\email[Jos\'e Luis Romero]{jose.luis.romero@univie.ac.at}
\begin{document}
\begin{abstract}
We show that  multi-window Gabor frames with windows in the Wiener algebra
$W(L^{\infty}, \ell^{1})$ are Banach frames for all Wiener amalgam spaces. As a by-product of our results we positively
answer an open question that was posed by  [Krishtal and Okoudjou, Invertibility of the Gabor frame
operator on the Wiener amalgam space, {\em {J}. {A}pprox. {T}heory},
153(2), 2008] and  concerns the continuity of the canonical dual of a Gabor frame with a continuous generator in the
Wiener algebra.  The proofs are based on a recent version of Wiener's $1/f$ lemma.
\end{abstract}
\maketitle

\section{Introduction}
\noindent A Gabor system is a collection of functions
$\gab(g, \Lambda) = \set{\pi(\lambda)g}{\lambda \in \Lambda}$,
where $\Lambda=\alpha\Zdst\times\beta\Zdst$ is a lattice, $g \in \LtRd$,
and the \emph{time-frequency shifts} of $g$ are given by
\begin{align*}
\pi(x,\omega) g (y) = e^{2 \pi i \omega \cdot y} g(y-x),
\qquad (y \in \Rdst).
\end{align*}
This system is called a \emph{frame} if
$\norm{f}_2^2 \approx \sum_\lambda \abs{\ip{f}{\pi(\lambda)g}}^2$.
In this case, there exists a \emph{dual Gabor system}
$\gab(\tilg, \Lambda) = \set{\pi(\lambda)\tilg}{\lambda \in \Lambda}$ providing the $L^2$-expansions
\begin{align}
\label{eq_expension}
f=\sum_\lambda \ip{f}{\pi(\lambda)g} \pi(\lambda) \tilg = \sum_\lambda \ip{f}{\pi(\lambda)\tilg} \pi(\lambda)g.
\end{align}
It is known that under suitable assumptions on $g$ and $\tilg$ that expansion extends to $L^p$ 
spaces \cite{badau03, grle01,grhe01,grheok02}. To some extent, these results parallel the theory of Gabor
expansions on modulation spaces \cite{fegr97}. However, since modulation spaces are defined in terms of time-frequency
concentration - and are indeed characterized by the \emph{size} of the numbers $\ip{f}{\pi(\lambda)g}$ - Gabor
expansions are also available in a more irregular context, where $\Lambda$ does not need to be a lattice. In
contrast, the theory of Gabor expansions in $L^p$ spaces relies on the strict algebraic structure of $\Lambda$. Indeed,
as shown in \cite{wa92}, Poisson summation formula implies that the frame operator $Sf := \sum_\lambda
\ip{f}{\pi(\lambda)g} \pi(\lambda)g$ can be written as 
\begin{align}
\label{eq_frame_op}
Sf(x)=\frac{1}{\beta^d} \sum_{j\in\Zdst} \sum_{k\in\Zdst}
\left(\overline{g(x-j/\beta-\alpha k)} g(x-\alpha k)\right) f(x-j/\beta).
\end{align}
This expression allows one to transfer spatial information about $g$ to boundedness properties of $S$ and is at the
core of the $L^p$-theory of Gabor expansions.

One often has explicit information only about $g$, while the existence of $\tilg$ is merely inferred from the frame
inequality. It is then important to know whether certain good properties of $g$ are also inherited by $\tilg$,
so as to deduce the validity of \eqref{eq_expension} in various function spaces. The key technical point is showing
the $S$ in invertible not only in $L^2$ but also in the other relevant spaces. This was proved for modulation
spaces in \cite{grle04, gr04} and for $L^p$ spaces in \cite{krok08}. In this latter case the analysis relies
on the fact that $S^{-1}$ is the frame operator associated with the dual Gabor system
$\gab(\tilg, \Lambda)$ and thus admits an expansion like the one in \eqref{eq_frame_op}.

The objective of this article is to extend the $L^p$-theory of Gabor expansions to multi-window Gabor systems
(see \cite{ba99,hala00}),
\begin{align*}
\gab(\Lambda^1, \ldots, \Lambda^n, g^1, \ldots, g^n)
= \set{\pi(\lambda^i)g^i}{\lambda^i \in \Lambda^i, 1 \leq i \leq n},
\end{align*}
where $\Lambda^1, \ldots, \Lambda^n \subseteq \Rtdst$ are lattices
$\Lambda^i=\latti$ and $g^1, \ldots, g^n: \Rdst \to \bC$. The challenge in doing so is that,
in contrast to the case of a single lattice $\Lambda$, the corresponding dual system
does not consist of lattice time-frequency translates of a certain family of functions
$\widetilde{g^1}, \ldots, \widetilde{g^n}$. The main technical point of this article is 
to show that, nevertheless, $S^{-1}$ admits a generalized expansion
\begin{align}
\label{eq_frame_op_1}
S^{-1}f(x)=\sum_{k} G_k(x) f(x-x_k),
\end{align}
where now the family of points $\sett{x_k}_k$ may not be contained in a lattice. We then prove that 
certain spatial localization properties of $g^1,\ldots, g^n$ imply corresponding
localization properties for the family
$\sett{G_k}_k$, and deduce that 
$S^{-1}$ is bounded on $L^p$-spaces. For technical reasons we work in the more general context of Wiener amalgam
spaces, that are spaces of functions that belong locally to $L^q$ and globally to $L^p$.

To achieve this, we study a Banach algebra of operators admitting an expansion like in \eqref{eq_frame_op_1}
with a suitable summability condition. We then resort to a recent Wiener-type result on non-commutative almost-periodic
Fourier series \cite{ba10-1} to prove that this algebra is spectral within the class of bounded operators on $L^p$.
This means that if an operator from that algebra is invertible on $L^p$, then the inverse operator necessarily
belongs to the algebra.
This approach is now common in time-frequency analysis \cite{st01-1, abk08, ba10-1, ba06-1, bacahela06, bacahela06-1,
BK11, fegr97, gr04,  grle04, ja95, K11} but its application to spaces that are not characterized by time-frequency
decay is rather subtle. As a by-product, we obtain consequences that are new even for the case of one generator. We
prove that if all the functions $g^i$ are continuous, so is every function in the dual system. This question was
posed in \cite{krok08}.

This paper is organized as follows. In Section~\ref{sec2} we define Wiener amalgam spaces and recall their
characterization via Gabor frames. In Section~\ref{sec3} we present the main technical result of this paper: a spectral
invariance theorem for a sub-algebra of weighted-shift operators in $B(L^{p}(\Rdst))$. In Section~\ref{sec4}, we use the
result of the previous section to extend the theory of multi-window Gabor frames to the class of Wiener amalgam spaces.
In particular, this last section contains a Wiener-type lemma for multi-window Gabor frames. 

\section{Amalgam spaces and Gabor expansions}\label{sec2} 
Before introducing the Wiener amalgam spaces, we first set the notation that will be used throughout the paper. 

Given $x,\omega \in \Rdst$, the translation and modulation operators act on a
function $f: \Rdst \to \bC$ by
\begin{align*}
T_x f (y) := f(y-x),
\qquad
M_\omega f (y) := e^{2 \pi i \omega \cdot y} f(y),
\end{align*}
where  $\omega \cdot y$ is the usual dot product. The time-frequency shift
associated with the point $\lambda=(x,\omega) \in \Rdst \times \Rdst$ is the operator
$\pi(\lambda)=\pi(x,\omega) := M_\omega T_x$.

Given two non-negative functions $f,g$, we write $f \lesssim g$ if $f \leq C
g$, for some constant $C>0$. If $\Esp$ is a Banach space, we denote by
$B(\Esp)$ the Banach algebra of all bounded linear operators on $\Esp$.

We use the following normalization of the Fourier transform of a function
$f: \Rdst \to \bC$:
\begin{align*}
\hat{f}(\omega) := \int_\Rdst f(x) e^{-2 \pi i \omega \cdot x} dx. 
\end{align*}

\subsection{Definition and properties of the amalgam spaces}\label{amalgamspacedef}
A function $w:\Rdst \to (0,+\infty)$  is called a {\em weight} if it is
continuous and symmetric (i.e. $w(x)=w(-x)$). A weight $w$ is
\emph{submultiplicative} if
\begin{align*}
w(x+y) \leq w(x)w(y),
\quad x,y \in \Rdst. 
\end{align*}
Prototypical examples are given by the polynomial weights
$w(x)=(1+\abs{x})^s$,
which are submultiplicative if $s \geq 0$. The main results in this article 
require to consider an extra condition on the weights. A weight $w$ is 
called \emph{admissible} if $w(0)=1$, it is submultiplicative  and satisfies 
the \emph{Gelfand-Raikov-Shilov} condition,
\begin{align*}
\lim_{k \rightarrow \infty} w(kx)^{1/k} = 1,
\quad x \in \Rdst.
\end{align*}
Note that this condition, together with the submultiplicativity, implies that
$w(x) \geq 1$, $x\in \Rdst$.

Given a submultiplicative weight $w$, a second weight $v:\Rdst \to
(0,+\infty)$ is called \emph{$w$-moderate} if there exists a constant
$\consv >0$ such that,
\begin{align}
\label{eq_consv}
v(x+y) \leq \consv w(x)v(y),
\quad x,y \in\Rdst.
\end{align}
For polynomial weights $v(x)=(1+\abs{x})^t, w(x)=(1+\abs{x})^s$, $v$ is
$w$-moderate if $\abs{t} \leq s$. If $v$ is $w$-moderate, it follows from~\eqref{eq_consv} and the symmetry of $w$ that $1/v$ is also $w$-moderate
(with the same constant).

Let $w$ be a submultiplicative weight and let $v$ be $w$-moderate. This will
be the standard assumption in this article. We will keep the weight $w$
fixed and consider classes of function spaces related to various weights $v$.
For $1 \leq p,q \leq +\infty$, we define the \emph{Wiener
amalgam space} $\winpqv$ as the class of all measurable functions
$f: \Rdst \to \bC$ such that,
\begin{align}
\label{eq_def_winpqv}
\norm{f}_\winpqv :=
\left(
\sum_{k \in \Zdst} \norm{f}^q_{\Lp([0,1)^d+k)} v(k)^q
\right)^{1/q} <\infty,
\end{align}
with the usual modifications when $q=+\infty$. As with Lebesgue spaces,
we identity two functions if they coincide almost everywhere. For a study of
this class of spaces in a much broader context see \cite{fe83, fe92-3, fost85}.
We only point out that, as a consequence of the assumptions on the weights $v$
and $w$, it can be shown that the partition $\{[0,1)^d+k: k \in \Zdst \}$ in~\eqref{eq_def_winpqv} can be replaced by more general coverings yielding an
equivalent norm.

Weighted amalgam spaces are \emph{solid}. This means that if $f \in \winpqv$
and $m \in \LinfRd$, then $mf \in \winpqv$ and
\begin{align}
\label{eq_solid_winpqv}
\norm{mf}_\winpqv \leq
\norm{m}_{\LinfRd} \norm{f}_\winpqv.
\end{align}
In addition, using the fact that $v$ is $w$-moderate, it follows that $\winpqv$
is closed under translations and
\begin{align}
\label{eq_trans_winpqv}
\norm{T_x f}_\winpqv \leq \consv w(x) \norm{f}_\winpqv,
\end{align}
where $\consv$ is the constant in~\eqref{eq_consv}.

The \emph{K\"othe-dual} of $\winpqv$ is the space of all
measurable functions $g:\Rdst \to \bC$ such that $g \cdot \winpqv \subseteq
\LRd$. It is equal to $\winpqvdual$, where $1/p+1/p'=1/q+1/q'=1$ for all  $1\leq p, q \leq  \infty$.
In particular, the pairing
\begin{align*}
\ip{\cdot}{\cdot}:\winpqv \times \winpqvdual \to \bC,
\qquad
\ip{f}{g} =\int_\Rdst f(x) \overline{g(x)}dx,
\end{align*}
is bounded.
The functionals arising from integration against functions in
$\winpqvdual$
determine a topology in $\winpqv$ that will be denoted by $\koethetoppqv$.

\subsection{Gabor expansions on amalgam spaces}
\label{sec_gab_lat}
We now recall the theory of Gabor expansions on Wiener amalgam
spaces as developed in \cite{fewe07, grle01, grhe01, grheok02}. Let $\Lambda = \latt$ be a
(separable) lattice which will be used to index time-frequency shifts.
For convenience we assume that $\alpha, \beta > 0$.
We point out that the theory  depends heavily on the
assumption that $\Lambda$ is a separable lattice $\latt$.

We first recall the definition of the family of sequence spaces corresponding
to amalgam spaces via Gabor frames.  For a weight $v$ and $1 \leq p,q \leq
+\infty$ we define the sequence space
$\seqpqvl$ in the following
way. We let $\Fou\LpQb$ stand for the image of $\LpQb$ under the discrete
Fourier transform. More precisely, a sequence
$c \equiv \set{c_j}{j \in \beta \Zdst} \subseteq \bC$
belongs to $\Fou \LpQb$ if there exists a (unique) function
$f \in \LpQb$ such that,
\begin{align*}
c_j = \hat{f}(j)=\beta^d \int_\Qb f(x) e^{-2 \pi i j x} dx,
\qquad j \in \beta \Zdst.
\end{align*}
The space $\Fou\LpQb$ is given the norm $\norm{c}_{\Fou\LpQb} :=
\norm{f}_\LpQb$.

We now let $\seqpqvl$ be the set of all sequences
$c\equiv \set{c_\lambda}{\lambda \in \Lambda} \subseteq \bC$ such that,
for each $k \in \alpha\Zdst$, the sequence $(c_{k, j})_{j\in\beta\Zdst}$ belongs
to
$\Fou\LpQb$ and
\begin{align*}
\norm{c}_\seqpqvl :=
\left(
\sum_{k\in\alpha\Zdst}
\bignorm{(c_{k, j})_{j\in\beta\Zdst}}_{\Fou\LpQb}^q v(k)^q
\right)^{1/q}
<+\infty,
\end{align*}
with the usual modifications when $q=\infty$. When $1<p<+\infty$ this is simply,
\begin{align*}
\norm{c}_\seqpqvl :=
\left(
\sum_{k\in\alpha\Zdst}
\Bignorm{\sum_{j \in \beta\Zdst} c_{k,j} e^{2\pi ij \cdot}}_{\LpQb}^q v(k)^q
\right)^{1/q}
<+\infty,
\end{align*}
and the usual modifications hold for $q=\infty$.

The following Theorem from \cite{grheok02}
introduces the analysis and synthesis operators, clarifies their precise meaning
and gives their mapping properties.
\begin{theo}\label{th_gabop} \cite[Theorem 3.2]{grheok02}.
Let $w$ be a submultiplicative weight, $v$ a $w$-moderate weight,
$g \in \winw$, and $1 \leq p,q \leq +\infty$. Then the following properties hold.
\begin{itemize}
\item[(a)] The analysis (coefficient) operator,
\begin{align*}
\coefgl: \winpqv \to \seqpqvl,\qquad
\coefgl(f) := \left( \ip{f}{\pi(\lambda)g} \right)_{\lambda \in \Lambda} 
\end{align*}
is bounded with a bound that only depends on $\alpha, \beta,
\norm{g}_\winw$, and the constant $\consv$ in~\eqref{eq_consv}.
\item[(b)] Let $c \in \seqpqvl$ and  $m_k \in \LpQb$ be the unique
functions such that $\widehat{m_k}(j) = c_{k,j}$. Then the series
\begin{align*}
\recgl(c) := \sum_{k \in \alpha \Zdst} m_k T_k g,
\end{align*}
converges unconditionally in the $\koethetoppqv$-topology and, moreover, 
unconditionally in the norm topology of $\winpqv$ if $p,q < \infty$.
\item[(c)] The synthesis (reconstruction) operator
$\recgl: \seqpqvl \to \winpqv$ is bounded with a bound that  depends only on
$\alpha, \beta, \norm{g}_\winw$, and the constant $\consv$ in~\eqref{eq_consv}.
\end{itemize}
\end{theo}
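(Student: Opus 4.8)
The plan is to prove all three assertions from a single ``weighted Schur'' estimate. Throughout I would use the freedom to change the covering and work with $\{\Qb+\ell/\beta:\ell\in\Zdst\}$ in place of $\{[0,1)^d+k:k\in\Zdst\}$; write $a^f_\ell:=\norm{f}_{\Lp(\Qb+\ell/\beta)}$ for $\ell\in\Zdst$ and $b_{\ell,k}:=\norm{g}_{L^\infty(\Qb+\ell/\beta-k)}$ for $\ell\in\Zdst,\ k\in\alpha\Zdst$, so that $\norm{f}_\winpqv\asymp\bignorm{(a^f_\ell\, v(\ell/\beta))_\ell}_{\ell^q(\Zdst)}$. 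The first step is the book-keeping lemma
\[
\sup_{\ell\in\Zdst}\ \sum_{k\in\alpha\Zdst} b_{\ell,k}\, w\!\bigl(\tfrac\ell\beta-k\bigr)
\ +\ \sup_{k\in\alpha\Zdst}\ \sum_{\ell\in\Zdst} b_{\ell,k}\, w\!\bigl(\tfrac\ell\beta-k\bigr)
\ \lesssim_{\alpha,\beta}\ \norm{g}_\winw ,
\]
which I would obtain by covering each small cube $\Qb+u$ appearing above by a bounded number of unit cubes and using submultiplicativity to note that $w$ changes by at most a fixed ($\alpha,\beta$-dependent) constant under a translation of size $\lesssim_{\alpha,\beta}1$. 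Separately, since $1/v$ is $w$-moderate one has $1/v(k)\lesssim w(k)$, hence $\winw\subseteq W(L^\infty,L^\infty_{1/v})\subseteq\winpqvdual$; as the latter space is translation invariant, $\pi(\lambda)g\in\winpqvdual$ and the pairing defining $\coefgl$ is well posed.

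For the core estimate I would fix a finite $F\subseteq\alpha\Zdst$, extend each $m_k$ to a $\tfrac1\beta$-periodic function, and work on a cube $\Qb+\ell/\beta$: there $\abs{m_k}$ equals its value on $\Qb$ and $\abs{(T_kg)(y)}\le b_{\ell,k}$, so Minkowski's inequality gives $\norm{\sum_{k\in F} m_kT_kg}_{\Lp(\Qb+\ell/\beta)}\le\sum_{k\in F} b_{\ell,k}\norm{m_k}_\LpQb$. Summing in $\ell$ against $v(\ell/\beta)^q$, moving the weight through via $v(\ell/\beta)\le\consv\, w(\tfrac\ell\beta-k)\, v(k)$, and applying the Schur test to the nonnegative matrix $\bigl(b_{\ell,k}w(\tfrac\ell\beta-k)\bigr)_{\ell,k}$ with the book-keeping lemma yields, uniformly in $F$,
\[
\Bignorm{\sum_{k\in F} m_kT_kg}_\winpqv
\ \lesssim_{\alpha,\beta}\ \consv\,\norm{g}_\winw\,\bignorm{(\norm{m_k}_\LpQb\, v(k))_{k\in F}}_{\ell^q}
\ \le\ \consv\,\norm{g}_\winw\,\norm{c}_\seqpqvl .
\]
The transposed computation gives (a): unfolding $\RRd=\bigcup_\ell(\Qb+\ell/\beta)$ and using that $e^{-2\pi ij\cdot(\ell/\beta)}=1$ for $j\in\beta\Zdst,\ \ell\in\Zdst$, one identifies $\ip{f}{\pi(k,j)g}=\widehat{m_k}(j)$ with $m_k:=\beta^{-d}\sum_\ell f(\,\cdot\,+\tfrac\ell\beta)\,\overline{g(\,\cdot\,+\tfrac\ell\beta-k)}\in\LpQb$, which satisfies $\norm{m_k}_\LpQb\lesssim\sum_\ell b_{\ell,k}\, a^f_\ell$; then $v(k)\le\consv\, w(\tfrac\ell\beta-k)\, v(\ell/\beta)$ together with the Schur test for the transposed matrix give $\norm{\coefgl(f)}_\seqpqvl\lesssim_{\alpha,\beta}\consv\,\norm{g}_\winw\,\norm{f}_\winpqv$.

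For (c) and the convergence statements in (b) I would pass to the full series by domination rather than by a duality argument. The estimate above applied to $\sum_{k\in F}\abs{m_kT_kg}$ (it only used $\abs{m_k}$ and $b_{\ell,k}$), together with monotone convergence, shows that $\Theta:=\sum_{k\in\alpha\Zdst}\abs{m_kT_kg}$ lies in $\winpqv$ with $\norm{\Theta}_\winpqv\lesssim\norm{g}_\winw\,\norm{c}_\seqpqvl$; in particular $\Theta<\infty$ a.e., so $\recgl(c)=\sum_k m_kT_kg$ converges absolutely a.e.\ to a function with $\abs{\recgl(c)}\le\Theta$, which therefore lies in $\winpqv$ with the same bound by solidity --- this is (c). For the mode of convergence, put $\Theta_F:=\sum_{k\notin F}\abs{m_kT_kg}$, so $\Theta_F\downarrow 0$ a.e.\ and $\Theta_F\le\Theta$; for any $h\in\winpqvdual$ one has $\Theta_F\abs{h}\le\Theta\abs{h}\in L^1(\RRd)$, hence $\bigl|\,\ip{\recgl(c)-\sum_{k\in F}m_kT_kg}{h}\bigr|\le\int\Theta_F\abs{h}\to 0$ by dominated convergence, for every increasing exhaustion $F\uparrow\alpha\Zdst$; this is unconditional convergence in $\koethetoppqv$. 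When $p,q<\infty$ the amalgam norm is order-continuous, so in fact $\norm{\Theta_F}_\winpqv\to0$ and the convergence holds unconditionally in norm.

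The steps I expect to require the most care are, first, the uniform-in-$k$ (and in-$\ell$) bound in the book-keeping lemma, since $w$ is unbounded and the shifts $\tfrac\ell\beta-k$ are not integers --- the point being that a bounded translation only multiplies $w$ by a constant --- and second, the convergence claim at the endpoint exponents $p\in\{1,\infty\}$ or $q\in\{1,\infty\}$, where norm convergence is unavailable and $\winpqv$ need not be a dual space; it is precisely the domination by $\Theta\in\winpqv$, rather than any adjoint/duality argument, that makes the $\koethetoppqv$-statement go through uniformly across all exponents. The Fourier-coefficient identity and the Schur/Young step, by contrast, I expect to be routine.
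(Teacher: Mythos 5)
Your proof is essentially correct. Note that the paper itself gives no proof of this statement --- it is quoted verbatim from \cite[Theorem 3.2]{grheok02} --- and your argument (periodization of $\ip{f}{\pi(k,j)g}$ to identify the $m_k$, the Walnut-type matrix $b_{\ell,k}=\norm{g}_{L^\infty(\Qb+\ell/\beta-k)}$ with the Schur/Young test against the weight $w(\ell/\beta-k)$, and domination of the synthesis series by $\Theta=\sum_k\abs{m_kT_kg}\in\winpqv$ to get unconditional $\koethetoppqv$-convergence at the endpoint exponents) is a faithful reconstruction of the standard argument behind that cited result; the domination trick is a clean way to get the weak convergence uniformly in $p,q$ without a separate duality argument. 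One small slip: the chain $\winw\subseteq W(L^\infty,L^\infty_{1/v})\subseteq\winpqvdual$ is false as written when $q'<\infty$ (it would need $\ell^\infty\hookrightarrow\ell^{q'}$); the correct and equally immediate route is $\winw=W(L^\infty,L^1_w)\subseteq W(L^{p'},L^{q'}_{1/v})$ directly, using $\ell^1_w\hookrightarrow\ell^{q'}_{1/v}$ since $1/v\lesssim w$ --- and in any case your Schur estimate for $\sum_\ell b_{\ell,k}a^f_\ell$ already establishes absolute convergence of the defining integrals, so nothing downstream is affected.
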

The definition of the operator $\recgl$ is rather abstract. As shown in
\cite{fewe07}, the convergence can be made explicit by means of a summability
method.

For $g \in \winw$, a sequence $c \in \seqpqvl$, and $N,M \geq 0$ let us
consider the partial sums
\begin{align*}
\gabpartsums_{N,M}(c)(x) :=
\sum_{\abs{k}_\infty \leq \alpha N}
\sum_{\abs{j}_\infty \leq \beta M}
c_{k,j} e^{2 \pi i j x} g(x-k).
\end{align*}
In the conditions ``$\abs{k}_\infty \leq N, \abs{j}_\infty \leq M$'' above
we consider elements $(k,j) \in \Lambda=\latt$; it is important that we use the
max norm. We also consider the regularized partial sums,
\begin{align*}
\gabregpartsums_{N,M}(c)(x) :=
\sum_{\abs{k}_\infty \leq \alpha N}
\sum_{\abs{j}_\infty \leq \beta M}
\regwegjM c_{k,j} e^{2 \pi i j x} g(x-k),
\end{align*}
where the \emph{regularizing weights} are given by,
\begin{align}
\label{eq_reg_weights}
\regwegjM := \prod_{h=1}^d\left( 1 - \frac{\abs{j_h}}{\beta(M+1)} \right).
\end{align}
We then have the following convergence result \cite{fewe07, grheok02}.

\begin{theo}
\label{th_gab_summ}
Let $w$ be a submultiplicative weight, $v$ a $w$-moderate weight,
$g \in \winw$, and $1 \leq p,q \leq +\infty$. Then the following properties hold.
\begin{itemize}

\item[(a)] If $1<p<\infty$ and $q<\infty$, then 
\begin{align*}
\gabpartsums_{N,M}(c) \rightarrow \recgl(c),
\qquad
\mbox{as }N,M \rightarrow \infty, 
\end{align*}
in the norm of $\winpqv$.
\item[(b)] For each $c \in \seqpqvl$, 
\begin{align*}
\gabregpartsums_{N,M}(c) \rightarrow \recgl(c),
\qquad
\mbox{as }N,M \rightarrow \infty, 
\end{align*}
in the $\koethetoppqv$-topology and also in the norm of $\winpqv$ if $p,q
<+\infty$.
\end{itemize}
\end{theo}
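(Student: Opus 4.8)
\noindent\emph{Proof outline.} The plan is to realize both families of partial sums as values of the already-understood reconstruction operator $\recgl$ on modified coefficient sequences, and then to estimate the resulting error. Given $c \in \seqpqvl$, with associated periodic functions $m_k \in \LpQb$ determined by $\widehat{m_k}(j) = c_{k,j}$, I would first note that the inner frequency sum is exactly a truncation of the Fourier series of $m_k$: in $\gabpartsums_{N,M}(c)$ it is the $d$-dimensional cube (max-norm) Dirichlet partial sum $D_M m_k = \sum_{\abs{j}_\infty \le \beta M} c_{k,j} e^{2\pi i j \cdot}$, while in $\gabregpartsums_{N,M}(c)$ it is the $d$-dimensional Fej\'er (Ces\`aro) mean $\sigma_M m_k$, since the weights~\eqref{eq_reg_weights} are precisely the tensor product of the one-dimensional Fej\'er weights, so that $\sigma_M$ is convolution with a positive, unit-mass approximate identity on the torus $\Rdst/(\beta^{-1}\Zdst)$. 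A short computation then gives $\gabpartsums_{N,M}(c) = \recgl(\tilde c)$ and $\gabregpartsums_{N,M}(c) = \recgl(\hat c)$, where $\tilde c$ (resp.\ $\hat c$) has $k$-th block $D_M m_k$ (resp.\ $\sigma_M m_k$) for $\abs{k}_\infty \le \alpha N$ and $0$ otherwise; both belong to $\seqpqvl$ because they have finitely many nonzero blocks, each a trigonometric polynomial. It therefore suffices to prove that $\recgl(\tilde c - c) \to 0$ and $\recgl(\hat c - c) \to 0$ in the relevant topology.

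For the norm statements --- part (a), with $1 < p < \infty$ and $q < \infty$, and the norm part of (b), with $p,q < \infty$ --- I would apply the boundedness of $\recgl$ from Theorem~\ref{th_gabop}(c) to bound $\bignorm{\recgl(\hat c - c)}_\winpqv$ by a constant times $\norm{\hat c - c}_\seqpqvl$, which splits into $\sum_{\abs{k}_\infty \le \alpha N} \norm{\sigma_M m_k - m_k}_\LpQb^q\, v(k)^q$ plus the tail $\sum_{\abs{k}_\infty > \alpha N} \norm{m_k}_\LpQb^q\, v(k)^q$ (and likewise with $D_M$ in place of $\sigma_M$). The tail vanishes as $N \to \infty$, being a tail of the convergent series $\norm{c}_\seqpqvl^q$. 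The first sum vanishes as $M \to \infty$, \emph{uniformly in $N$}, by dominated convergence for series: each term tends to $0$ --- for $\sigma_M$ because it is an approximate identity, valid for all $1 \le p < \infty$; for $D_M$ because $D_M m_k \to m_k$ in $\LpQb$ when $1 < p < \infty$ (the one-dimensional M.~Riesz theorem on $L^p$-convergence of Fourier series, applied in each coordinate) --- and each is dominated by a fixed multiple of $\norm{m_k}_\LpQb^q\, v(k)^q$ (uniform boundedness of $\sigma_M$, resp.\ $\sup_M \norm{D_M}_{\LpQb \to \LpQb} < \infty$). A standard double-limit argument (first choose $M$ to make the first sum small uniformly in $N$, then choose $N$ for the tail) finishes these cases.

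For the $\koethetoppqv$-convergence in (b), which is required for all $1 \le p,q \le \infty$, I would fix $\phi \in \winpqvdual$ and use the unconditional $\koethetoppqv$-convergence of $\sum_k m_k T_k g$ from Theorem~\ref{th_gabop}(b) to pair term by term, so that $\ip{\recgl(\hat c - c)}{\phi}$ equals $\sum_{\abs{k}_\infty \le \alpha N} \ip{(\sigma_M m_k - m_k) T_k g}{\phi}$ minus the tail $\sum_{\abs{k}_\infty > \alpha N} \ip{m_k T_k g}{\phi}$. The tail is a tail of an absolutely convergent series --- this is the amalgam/Young estimate underlying the boundedness of $\recgl$ --- hence vanishes as $N \to \infty$. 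For the first sum I would write $\sigma_M m_k = m_k * F_M$, with $F_M$ the Fej\'er kernel, and use Fubini to obtain $\ip{(\sigma_M m_k - m_k) T_k g}{\phi} = \int_{[0,1/\beta)^d} F_M(t)\bigl(G_k(t) - G_k(0)\bigr)\, dt$, where $G_k(t) := \int_\Rdst m_k(x - t)\, \overline{\phi(x)}\, g(x - k)\, dx$ is continuous by continuity of translation and satisfies $\abs{G_k(t) - G_k(0)} \le C\, \norm{m_k}_\LpQb\, \norm{(T_k g)\overline{\phi}}_{W(L^{p'},L^1)}$ uniformly in $t$ and $M$, the right-hand side being summable (or, when $q = \infty$, $\ell^{q'}$-bounded) in $k$ by the same amalgam/Young estimate. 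Since $\{F_M\}$ is a positive approximate identity, each summand tends to $0$ as $M \to \infty$, so dominated convergence makes the first sum tend to $0$ as $M \to \infty$, uniformly in $N$; the double-limit argument then yields $\ip{\gabregpartsums_{N,M}(c)}{\phi} \to \ip{\recgl(c)}{\phi}$, and since $\phi$ was arbitrary this is the asserted $\koethetoppqv$-convergence.

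The main obstacle I anticipate is the $\koethetoppqv$-case when $p$ or $q$ equals $\infty$: there one cannot pass to an $\LpQb$-norm limit of the frequency truncations (Fej\'er means need not converge in $L^\infty$ for a general bounded $m_k$, and there is no dominated convergence over $k$ when $q = \infty$), so the convolution-with-an-approximate-identity argument has to be carried out inside the duality pairing, and the uniform-in-$k$ domination of $G_k(t) - G_k(0)$ must be read off from the amalgam/Young estimates. A secondary point is assembling, and keeping uniform in $M$, the one-dimensional harmonic-analysis inputs --- unit-mass positivity of the Fej\'er kernel, $L^p$-boundedness ($1 < p < \infty$) of the Dirichlet projections via the Hilbert transform --- in the $d$-dimensional tensor-product form needed here.
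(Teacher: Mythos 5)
Your proposal is correct. It is worth noting, though, that the paper itself only \emph{proves} part (b) in the case $p=+\infty$, delegating part (a) to \cite[Proposition 4.6]{grheok02} and the case $p<+\infty$ of (b) to \cite[Theorem 4]{fewe07}; your self-contained treatment of those delegated parts --- realizing $\gabpartsums_{N,M}(c)$ and $\gabregpartsums_{N,M}(c)$ as $\recgl$ applied to truncated coefficient sequences, then invoking the boundedness of $\recgl$, the uniform $\LpQb$-bounds and $\LpQb$-convergence of the Dirichlet (M.~Riesz, $1<p<\infty$) and Fej\'er means, and a dominated-convergence-plus-tail double limit --- is the standard route and consistent with what those references do. For the part the paper actually proves, the $\koethetoppqv$-convergence when $p=+\infty$, your argument is essentially the paper's: pair against a dual element, establish an absolute-convergence bound of amalgam/H\"older type to reduce to finitely many $k$ uniformly in $M$, and then kill each remaining term using the weak* convergence of the Fej\'er means of an $L^\infty$ function tested against an $L^1$ function. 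You phrase that last step as continuity at $0$ of $G_k(t)=\int_{\Rdst} m_k(x-t)\overline{\phi(x)}g(x-k)\,dx$, which is legitimate because $(T_kg)\overline{\phi}\in L^1(\Rdst)$ and continuity of translation is applied to the $L^1$ factor rather than to $m_k\in\LinfRd$; the paper instead moves $\regM$ onto the $L^1$ test function, and these are the same fact. The only cosmetic difference is that the paper additionally splits $f$ and $g$ into cube pieces $f_j$, $g_j$ to display the dominating double series explicitly, while you absorb this into a single amalgam/Young estimate indexed by $k$.
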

\begin{remark}
A more refined convergence statement, with more general summability methods,
can be found in \cite{fewe07}. We will only need the norm and weak
convergence of Gabor expansions but we point out that the problem of pointwise
summability has also been extensively studied \cite{fewe07, grle01, grhe01, grheok02,
we09-1}.
\end{remark}
\begin{proof}
Part (a) is proved in \cite[Proposition 4.6]{grheok02}. The case $p<+\infty$ of (b)
is proved in \cite[Theorem 4]{fewe07}, where only unweighted amalgam spaces are considered.
The same proof extends with simple modifications to the weighted case.
The case $p=+\infty$ is also
treated in \cite{fewe07} but in a different direction, establishing
norm convergence under additional assumptions on the sequence $c$.
Hence, we sketch a proof of (b) when $p=+\infty$.

Let $c \in \seqinfqv(\Lambda)$. According to Theorem \ref{th_gabop},
$\recgl(c)$ is given by
\begin{align}
\label{eq_recc}
\recgl(c) = \sum_{k \in \alpha \Zdst} m_k T_k g,
\end{align}
where $m_k \in \Linf([0,1/\beta)^d)$ and $\norm{c}_\seqinfqv = \bignorm{(
\norm{m_k}_\infty )_k}_{\ell^q_v}$.
Let us also write
\begin{align*}
\gabregpartsums_{N,M}(c)
:= \sum_{\abs{k}_\infty \leq \alpha N} 
\regM(m_k) T_k g,
\end{align*}
where $\regM(m_k)(x) = 
\sum\limits_{\abs{j}_\infty \leq \beta M}
\regwegjM c_{k,j} e^{2\pi i j x}$
are the Fej\'er means of $m_k$.

Let $f \in \wininfqvdual$, we must show that
$\ip{\recgl(c)-\gabregpartsums_{N,M}(c)}{f} \rightarrow 0$.
Let us write $f = \sum_{j \in \alpha \Zdst} f_j$
and $g= \sum_{j \in \alpha \Zdst} g_j$,
where $f_j=f \chi_{[0,\alpha)^d+j}$
and $g_j=g \chi_{[0,\alpha)^d+j}$.
Hence, for any subset $L \subseteq \alpha\Zdst$,
\begin{align}
\label{eq_bracket}
\ip{\sum_{k\in L} m_k T_k g}{f}
=
\sum_{k \in L} \sum_{j,j' \in \alpha \Zdst} \ip{m_k T_k g_j}{f_{j'}}
=
\sum_{k \in L} \sum_{j \in \alpha \Zdst}
\ip{m_k T_k g_j}{f_{j+k}}.
\end{align}
In addition,
\begin{align}
\label{eq_bound_bracket}
&\sum_{j \in \alpha \Zdst}
\sum_{k \in L} \abs{\ip{m_k T_k g_j}{f_{j+k}}}
\\
\nonumber
&\quad\lesssim
\sum_{k \in L} \sum_{j \in \alpha\Zdst}
\norm{g_j}_\infty w(j)
\norm{m_k}_\infty v(k)
\norm{f_{j+k}}_1 (1/v)(j+k)
\\
\nonumber
&\quad\lesssim \norm{g}_\winw \norm{c}_\seqinfqv \norm{f}_\wininfqvdual.
\end{align}
Hence, given $\varepsilon>0$, there exists $N_0 >0$ 
and a finite set $J \subseteq \alpha\Zdst$ such that
\begin{align}
\label{eq_bound_bracket3}
\abs{\ip{\recgl(c)}{f} - 
\sum_{\abs{k} \leq \alpha N_0} 
\sum_{j \in J}
\ip{m_k T_k g}{f}}<\varepsilon.
\end{align}
Similarly, using the fact
$\norm{\regM(m_k)}_\infty \lesssim \norm{m_k}_\infty$ independently of $M$,
it follows that $N_0$ can be chosen so that, in addition,
\begin{align}
\label{eq_bound_bracket4}
\abs{\gabregpartsums_{N,M}(c)-
\sum_{\abs{k} \leq \alpha N_0} 
\sum_{j \in J} \ip{\regM(m_k) T_k g}{f}}
<\varepsilon,
\end{align}
for all $M \geq 0$ and $N \geq N_0$.
Finally, for each $j,k$,
\begin{align*}
\ip{(m_k - \regM(m_k)) T_k g_j}{f_{j+k}}
=\ip{m_k - \regM(m_k)}{\overline{T_k g_j}f_{j+k}}
\rightarrow 0,
\end{align*}
as $M \rightarrow +\infty$ because $\overline{T_k g_j}f_{j+k} \in L^1$
and the Fej\'er means of an $L^\infty$ function converge in the
weak*-topology. This, together with the estimates in~\eqref{eq_bound_bracket3}
and~\eqref{eq_bound_bracket4}, yields
the desired conclusion.
\end{proof}

We now present a representation of Gabor frame operators that will be essential
for the results to come. For proofs see \cite{wa92} or
\cite[Theorem, 4.2 and Lemma 5.2]{grheok02} for the weighted version.
\begin{theo}
\label{th_walnut_rep}
Let $w$ be a submultiplicative weight, $v$ a $w$-moderate weight,
$g,h \in \winw$ and $1 \leq p,q \leq +\infty$. Then 
the operator
$\rechl \coefgl: \winpqv \to \winpqv$ can be written as
\begin{align}
\label{eq_walnut1}
\rechl \coefgl f = \beta^{-d} \sum_{j \in \Zdst} G_j T_{\frac{j}{\beta}} f,
\end{align}
where,
\begin{align}
\label{eq_walnut2}
G_j(x) := \sum_{k \in \Zdst}
\overline{g(x- j/\beta-\alpha k)} h(x-\alpha k),
\qquad x\in\Rdst.
\end{align}
In addition, the functions $G_j:\Rdst \to \bC$ satisfy
\begin{align}
\label{eq_walnut3}
\sum_{j \in \Zdst} \norm{G_j}_\infty w(j/\beta)
\lesssim \norm{g}_\winw \norm{h}_\winw <+\infty.
\end{align}
As a consequence, the series in~\eqref{eq_walnut1} converges
absolutely in the norm of $\winpqv$.
\end{theo}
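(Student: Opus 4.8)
Three facts are needed: the bound \eqref{eq_walnut3}; the representation \eqref{eq_walnut1}--\eqref{eq_walnut2}; and the absolute convergence of the series in \eqref{eq_walnut1}, which will follow from the first two together with \eqref{eq_solid_winpqv} and \eqref{eq_trans_winpqv}. I would establish \eqref{eq_walnut3} first. Put $Q=[0,1)^d$ and let $a_m:=\norm{g}_{L^\infty(Q+m)}$, $b_m:=\norm{h}_{L^\infty(Q+m)}$ for $m\in\Zdst$, so that $\sum_m a_m w(m)=\norm{g}_\winw$ and $\sum_m b_m w(m)=\norm{h}_\winw$. From \eqref{eq_walnut2}, $\norm{G_j}_\infty\le\sup_{x\in\Rdst}\sum_{k\in\Zdst}\abs{g(x-j/\beta-\alpha k)}\,\abs{h(x-\alpha k)}$; estimating each factor by the local sup-norms of $g$ and $h$ over the (boundedly many) integer unit cubes meeting the translates involved yields $\norm{G_j}_\infty\lesssim\sum_{k\in\Zdst}\sum_{l,l'}a_l b_{l'}$, where for fixed $k$ the indices $l$ run near $-j/\beta-\alpha k$ and $l'$ near $-\alpha k$. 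On that index set $j/\beta\approx l'-l$, so $w(j/\beta)\lesssim w(l)w(l')$ by submultiplicativity and symmetry of $w$, and --- the key point --- every pair $(l,l')$ arises for at most boundedly many pairs $(j,k)$, the bound depending only on $\alpha,\beta$. Hence $\sum_j\norm{G_j}_\infty w(j/\beta)\lesssim\bigl(\sum_l a_l w(l)\bigr)\bigl(\sum_{l'} b_{l'} w(l')\bigr)=\norm{g}_\winw\norm{h}_\winw$, which is \eqref{eq_walnut3}; the same bookkeeping shows the series defining $G_j$ converges absolutely and uniformly, so $G_j\in\LinfRd$. Then \eqref{eq_solid_winpqv} and \eqref{eq_trans_winpqv} give $\norm{G_j T_{j/\beta}f}_\winpqv\le\consv\,\norm{G_j}_\infty\,w(j/\beta)\,\norm{f}_\winpqv$, and summing over $j$ via \eqref{eq_walnut3} yields the claimed absolute convergence and the operator-norm bound.

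\textbf{The representation.} Fix $f\in\winpqv$ and set $c:=\coefgl(f)$, so $c_{k,n}=\ip{f}{M_n T_k g}=\widehat{f\,\overline{T_k g}}(n)$ for $(k,n)\in\Lambda$. Since $1/v$ is $w$-moderate we have $1/v\lesssim w$ and hence $\winw\subseteq\winpqvdual$, so $f\,\overline{T_k g}\in\LRd$ for every $k\in\alpha\Zdst$; its $\beta^{-1}\Zdst$-periodization $P_k(x):=\sum_{j\in\Zdst}(f\,\overline{T_k g})(x-j/\beta)$ is then a well-defined element of $L^1([0,1/\beta)^d)$ with Fourier coefficients $\widehat{P_k}(n)=\beta^d\,\widehat{f\,\overline{T_k g}}(n)=\beta^d c_{k,n}$. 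Comparing with the defining relation $\widehat{m_k}(n)=c_{k,n}$ of Theorem~\ref{th_gabop}(b) gives $m_k=\beta^{-d}P_k$. Substituting this into $\rechl\coefgl f=\sum_{k\in\alpha\Zdst}m_k T_k h$ and writing out $P_k$, the operator becomes the sum over $(k,j)\in(\alpha\Zdst)\times\Zdst$ of the functions $\beta^{-d}f(\cdot-j/\beta)\,\overline{g(\cdot-j/\beta-k)}\,h(\cdot-k)$; grouped over $k$ this sums back to $\rechl\coefgl f$, while grouped over $j$ it sums to $\beta^{-d}\sum_j G_j T_{j/\beta}f$, with $G_j$ as in \eqref{eq_walnut2}. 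To see that the two groupings agree, it suffices to check that the family is absolutely summable in $L^1(K)$ for every compact $K$: integrating the sum of absolute values over any translate of $[0,1/\beta)^d$ and applying Tonelli, one bounds it by $\lesssim\sum_j\norm{G_j}_\infty\,w(j/\beta)\,\norm{f}_\winpqv$, which is finite by \eqref{eq_walnut3}, once one notes that the local $L^1$-norms of $f$ satisfy $\norm{f}_{L^1(Q+l)}\lesssim w(l)\norm{f}_\winpqv$ (a consequence of $\winpqv\hookrightarrow L^1_{\mathrm{loc}}$ and $v\gtrsim 1/w$). Unconditional $L^1_{\mathrm{loc}}$-summability then forces the two sums to coincide as functions, proving \eqref{eq_walnut1}. (Alternatively, one may verify the identity for $f$ in the $\koethetoppqv$-dense subspace $\DRd$, where the rearrangement is elementary, and extend by the boundedness and $\koethetoppqv$-continuity of both sides.)

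\textbf{Where the difficulty lies.} The substantive step is \eqref{eq_walnut3}: morally it just says that the Walnut coefficients $(G_j)_j$ inherit the $\ell^1_w$-type decay of $g$ and $h$, but carrying it out requires the routine-yet-tedious bookkeeping of passing from the lattice $\latt$ and the shifts $j/\beta$ to integer unit cubes. Everything after that --- identifying $m_k$ with $\beta^{-d}P_k$ by matching Fourier coefficients, and rearranging the double series --- is essentially formal; the only thing to watch is that when $p=\infty$ or $q=\infty$ the Gabor synthesis series of Theorem~\ref{th_gabop}(b) converges merely in the $\koethetoppqv$ topology, so the identifications above should be read in $L^1_{\mathrm{loc}}(\Rdst)$, equivalently tested against $\DRd$, rather than in $\winpqv$-norm. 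A complete treatment appears, in the unweighted case, in \cite{wa92}, and in the weighted case in \cite[Theorem~4.2 and Lemma~5.2]{grheok02}.
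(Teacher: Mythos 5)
Your proof is correct and follows the standard Walnut-representation argument: the paper itself gives no proof of this theorem, deferring entirely to \cite{wa92} and \cite[Theorem 4.2 and Lemma 5.2]{grheok02}, and your reconstruction (the $\ell^1_w$ bookkeeping for \eqref{eq_walnut3}, the identification $m_k=\beta^{-d}P_k$ by matching Fourier coefficients of the $\beta^{-1}\Zdst$-periodization, and the Tonelli regrouping justified locally in $L^1$) is precisely the argument of those references, adapted to the weighted case. The one point you rightly flag --- that for $p=\infty$ or $q=\infty$ the identifications must be read in $L^1_{\mathrm{loc}}$ rather than in $\winpqv$-norm --- is handled correctly.
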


\section{The algebra of $L^\infty$-weighted shifts}\label{sec3}
\label{sec_operators}
\subsection{$L^\infty$-weighted shifts}
Guided by \eqref{eq_walnut1}, we will now introduce a Banach *-algebra of operators on function spaces that will be the key
technical object of the article. For an admissible weight $w$ we let
$\opAw$ be the set of all families $\mathcal M = (m_x)_{x\in \Rdst} \in \lwLinfRd$ with the standard Banach space norm
\begin{align}
\label{eq_opAw}
\|\mathcal M\|_{\opAw}=\sum_{x \in \Rdst} \norm{m_x}_{\LinfRd} w(x) < +\infty.
\end{align}
The algebra structure 
and the involution on $\opAw$, however, will be non-standard.
They will come from the
 identification of $\opAw$ with the class of operators on function spaces of the form
\begin{align}
\label{eq_deff_sum_mx_tx}
f \mapsto \sum_{x \in \Rdst} m_x f(\cdot-x).
\end{align}
Observe that due to \eqref{eq_opAw} the family 
$\mathcal M = (m_x)_{x\in \Rdst}$ has countable support and also that
the operator in~\eqref{eq_deff_sum_mx_tx} is well-defined
and bounded on all $\LpRd$, $p\in[1,\infty]$ (recall that the admissibility of $w$ implies
that $w \geq 1$).

With a slight abuse of notation, given a function $m \in L^\infty(\Rdst)$ we also denote by $m$ the
multiplication operator $f \mapsto mf$. It is then convenient to write $\mathcal M\in\opAw$ as
$${\mathcal M }= \sum_{x \in \Rdst} m_x T_x, \quad (m_x)_{x\in \Rdst} \in \lwLinfRd,$$
and endow $\opAw$ with the product and involution inherited from $B(\LtRd)$.
More precisely, the product on $\opAw$ is given by
\begin{align*}
\left( \sum_x m_x T_x \right)
\left( \sum_x n_x T_x \right)
=
\sum_x \left( \sum_y m_y n_{x-y}(\cdot -y) \right) T_x,
\end{align*}
and the involution -- by
\begin{align*}
\left( \sum_x m_x T_x \right)^*
= \sum_x \overline{m_x(\cdot +x)} T_{-x}
= \sum_x \overline{m_{-x}(\cdot -x)} T_x.
\end{align*}

It is straightforward to verify that with this structure $\opAw$ is, indeed, a
Banach *-algebra which embeds continuously into $B(\LtRd)$. 
We shall establish a number of other continuity properties of the operators
defined by families in
$\opAw$ in Proposition \ref{prop_mapping} below. These will be useful in dealing with Gabor expansions on
amalgam spaces.

 Before that,
we mention that the identification of families in $\opAw$ and operators on $B(\LpRd)$ given by the operator in~\eqref{eq_deff_sum_mx_tx} is one-to-one; this follows from the characterization of $\opAw$ in the following subsection and can easily be proved directly. Because of this we shall no longer distinguish between the families in $\opAw$ and operators generated by them. We will write  $\opAw\subset B(\LpRd)$ if we need to highlight that we treat members of $\opAw$ as operators on $\LpRd$. We also point out that for $m \in \LinfRd$ and $x,w \in \Rdst$
\begin{equation}\label{obs_conj_mw}
M_{\omega} m T_x M_{-\omega} = e^{2 \pi i \omega \cdot x} m T_x. 
\end{equation}

\begin{prop}
\label{prop_mapping}
Let $1 \leq p,q \leq +\infty$ and let $v$ be a $w$-moderate weight. Then the
following statements hold.
\begin{itemize}
\item[(a)] $\opAw \into B(\winpqv)$. More precisely,
every  ${\mathcal M}=\sum_x m_x T_x \in \opAw$ defines a bounded
operator on $\winpqv$ given by the formula
\begin{align*}
{\mathcal M}(f) :=  \sum_x m_x f(\cdot-x).
\end{align*}
The series defining ${\mathcal M}: \winpqv \to \winpqv$ converges absolutely in
the norm of $\winpqv$ and
$\norm{{\mathcal M}}_{B(\winpqv)} \leq \consv \norm{{\mathcal M}}_\opAw$,
where $\consv$ is the constant in \eqref{eq_consv}.
\item[(b)] For every ${\mathcal M} \in \opAw$, $f \in \winpqv$
and $g \in \winpqvdual$,
\begin{align*}
\ip{{\mathcal M}(f)}{g}=\ip{f}{{\mathcal M}^*(g)}. 
\end{align*}
\item[(c)] For every ${\mathcal M} \in \opAw$, the operator
${\mathcal M}: \winpqv \to \winpqv$ is continuous in the
$\koethetoppqv$-topology.
\end{itemize}
\end{prop}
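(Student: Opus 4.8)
The plan is to establish parts (a), (b), (c) in that order, with (a) doing most of the work and (b), (c) following by duality arguments.

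For part (a), the key is the two norm estimates \eqref{eq_solid_winpqv} (solidity) and \eqref{eq_trans_winpqv} (translation invariance) of amalgam spaces. Given $\mathcal M = \sum_x m_x T_x \in \opAw$ and $f \in \winpqv$, I would estimate term by term: using solidity and then translation invariance,
\begin{align*}
\norm{m_x T_x f}_\winpqv \leq \norm{m_x}_\LinfRd \norm{T_x f}_\winpqv \leq \consv \norm{m_x}_\LinfRd\, w(x)\, \norm{f}_\winpqv.
\end{align*}
Summing over $x$ (a countable set, by \eqref{eq_opAw}) gives $\sum_x \norm{m_x T_x f}_\winpqv \leq \consv \norm{\mathcal M}_\opAw \norm{f}_\winpqv < \infty$, so the series defining $\mathcal M(f)$ converges absolutely in $\winpqv$; since $\winpqv$ is a Banach space the limit lies in $\winpqv$ and the operator norm bound $\norm{\mathcal M}_{B(\winpqv)} \leq \consv \norm{\mathcal M}_\opAw$ follows immediately. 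One should also check consistency: this pointwise-defined sum agrees on $\winpqv \cap \LtRd$ with the $B(\LtRd)$-operator $\mathcal M$, which is clear since both are given by the same absolutely convergent (hence pointwise a.e.\ convergent) formula $\sum_x m_x f(\cdot - x)$.

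For part (b), I would first verify the identity $\ip{m_x T_x f}{g} = \ip{f}{\overline{m_x(\cdot+x)} T_{-x} g}$ for a single term, which is just Fubini/change of variables on the bounded pairing $\ip{\cdot}{\cdot}:\winpqv \times \winpqvdual \to \bC$ (note $1/v$ is $w$-moderate, so $\overline{m_x(\cdot+x)}T_{-x}g \in \winpqvdual$ by the analogue of part (a) applied to $\mathcal M^* \in \opAw$). Then summing over $x$: the left-hand side converges because $\mathcal M(f) \in \winpqv$ and the pairing is continuous, while the right-hand side is the pairing of $f$ with $\mathcal M^*(g) = \sum_x \overline{m_x(\cdot+x)}T_{-x}g$, which converges absolutely in $\winpqvdual$ by part (a) applied to $\mathcal M^*$. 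Interchanging sum and pairing is justified by this absolute convergence, giving $\ip{\mathcal M(f)}{g} = \ip{f}{\mathcal M^*(g)}$.

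For part (c), recall that the $\koethetoppqv$-topology is the weak topology on $\winpqv$ induced by the functionals $f \mapsto \ip{f}{g}$, $g \in \winpqvdual$. So it suffices to show that for each fixed $g \in \winpqvdual$, the map $f \mapsto \ip{\mathcal M(f)}{g}$ is $\koethetoppqv$-continuous; but by part (b) this map equals $f \mapsto \ip{f}{\mathcal M^*(g)}$, which is one of the defining functionals of $\koethetoppqv$ (since $\mathcal M^*(g) \in \winpqvdual$), hence continuous by definition. Thus $\mathcal M$ is continuous in the $\koethetoppqv$-topology. I expect the main (though still minor) obstacle to be the bookkeeping around consistency of the various definitions of $\mathcal M$ — as an operator on $\LtRd$, on $\winpqv$, and via the pointwise formula — and the corresponding justification of term-by-term manipulation of the defining series; all of this is controlled by the absolute convergence established in part (a).
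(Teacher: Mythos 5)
Your proof is correct and follows essentially the same route as the paper: part (a) from the solidity estimate \eqref{eq_solid_winpqv} together with the translation bound \eqref{eq_trans_winpqv}, part (b) from the fact that the involution on $\opAw$ is the adjoint with the term-by-term interchange justified by the absolute convergence from (a), and part (c) as an immediate consequence of (b) since the $\koethetoppqv$-topology is the weak topology induced by the pairing with $\winpqvdual$. The paper's own proof is just a more compressed version of exactly this argument.
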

\begin{proof}
Part (a) follows immediately from~\eqref{eq_solid_winpqv} and~\eqref{eq_trans_winpqv}. Part (b) follows from the fact the involution in
$\opAw$ coincides with taking adjoint. The interchange of summation and
integration is justified by the absolute convergence in part (a). Part (c)
follows immediately from (b).
\end{proof}

\subsection{Spectral invariance}
In this section we shall exhibit the main technical result of the article. We  remark that similar and more general results appear in \cite{Bas92, Bas97vuz, K99}. We, however, feel obliged to present a proof here because the rest of our paper is based on this result.
The key ingredient in the proof is the identification of the algebra $\opAw$ with  a class of
almost periodic elements associated with a certain group representation.
We give a brief account of the theory as required for our purposes.
For a more general presentation see \cite{ba10-1} and references therein. 

For $y \in \Rdst$ and $\Mou \in B(\LpRd)$, $p\in[1,\infty]$, 
let $\rep(y)\Mou := M_y \Mou M_{-y}$. Explicitly,
\[ \rho(y)\Mou f(x) = e^{2\pi i y\cdot x}(\Mou g)(x)~~,~~
g(x)=e^{-2\pi i y\cdot x}f(x). \]
The map $\rep: \Rdst \to B(B(\LpRd))$ defines an isometric representation
of $\Rdst$ on the algebra $B(\LpRd)$. This means that $\rep$ is a
representation
of $\Rdst$ on the Banach space $B(\LpRd)$ and, in addition, for each $y \in
\Rdst$, $\rep(y)$ is an algebra automorphism and an isometry.

A continuous map $Y:\Rdst\rightarrow B(\LpRd)$ is   {\em almost-periodic
in the sense of Bohr} if for every $\eps>0$ there is a compact $K=K_\eps
\subset\Rdst$ such that for all $x\in\Rdst$ 
\[ (x+K)\cap \{ y\in\Rdst~|~\norm{Y(g+y)-Y(g)}<\eps,~\forall g\in\Rdst\}
\neq \emptyset \]
Then $Y$ extends uniquely to a continuous map of the Bohr compactification
$\Rdc$ of $\Rdst$, also denoted by $Y$. Thus, now
$Y:\Rdc\rightarrow B(\LpRd)$, where
$\Rdc$ represents the topological dual group (i.e. the group of characters)
of $\Rdst$ when $\Rdst$ is endowed with the discrete topology. The normalized
Haar measure on $\Rdc$ is denoted by $\bar{\mu}(dy)$.

For each $\Mou \in B(\LpRd)$, we consider the map,
\begin{align}
\widehat \Mou: \Rdst \to B(\LpRd),
\qquad \widehat \Mou(y) := \rep(y)\Mou = M_y \Mou M_{-y}.
\label{eq:e18}
\end{align}

An operator $\Mou \in B(\LpRd)$ is said to be $\rep$-\emph{almost periodic}
if the map $\widehat \Mou$ is continuous and almost-periodic in the sense 
of Bohr. For every $\rep$-almost periodic
operator $\Mou$, the function $\widehat \Mou$ admits 
a $B(\LpRd)$-valued Fourier series,
\begin{align}
\label{eq_ap_fourier}
\widehat \Mou(y) \sim \sum_{x \in \Rdst} e^{2 \pi i y\cdot x} C_x (\Mou),
\qquad (y \in \Rdst).
\end{align}
The coefficients $C_x(\Mou) \in B(\LpRd)$ in~\eqref{eq_ap_fourier} 
are uniquely determined by $\Mou$ via
\begin{align}
\label{eq:Cx}
C_x(\Mou) = \int_{\Rdc}\widehat{\Mou}(y) e^{-2\pi i y\cdot x}\bar{\mu}(dy)
=\lim_{T\to\infty} \frac1{(2T)^d}\int_{[-T,T]^d}\widehat{\Mou}(y) e^{-2\pi i y\cdot x}dy
\end{align}
and, therefore, satisfy
\begin{align}
\label{eq_eigen}
\rep(y) C_x (\Mou) = e^{2 \pi i y\cdot x} C_x (\Mou).
\end{align}
Hence, they are eigenvectors of $\rep$ (see \cite{ba10-1} for details).

Within the class of $\rep$-almost periodic operators we consider
$\apw$, the subclass of those operators for which the Fourier
series in~\eqref{eq_ap_fourier} is $w$-summable,
where $w$ is an admissible weight. More precisely, a $\rep$-almost periodic
operator $\Mou$ belongs to $\apw$ if its Fourier coefficients with respect to
$\rep$ satisfy
\begin{align}
\label{eq_apw}
\norm{\Mou}_{\apw} :=
\sum_{x \in \Rdst} \norm{C_x(\Mou)}_{B(\LpRd)} w(x) < + \infty.
\end{align}
By the submultiplicativity of $w$ we know that $w \geq 1$, 
so for operators in $\apw$ the series in~\eqref{eq_ap_fourier}
converges absolutely in the norm of $B(\LpRd)$ to $\widehat \Mou(y)$:
\begin{align}
\widehat{\Mou}(y) = \sum_{x\in\Rdst} e^{2 \pi i y\cdot x} C_x (\Mou), 
\qquad y \in \Rdst,
\end{align}
where each $C_x \in B(\LpRd)$ satisfies~ \eqref{eq:Cx} and, hence, \eqref{eq_eigen}.
In particular, for $y=0$, it follows that each $\Mou \in \apw$ can
be written as
\begin{align}
\label{eq_T_Cx}
\Mou = \sum_{x \in \Rdst} C_x (\Mou).
\end{align}
Conversely, if $\Mou$ is given by~\eqref{eq_T_Cx}, with
the coefficients $C_x$ satisfying~\eqref{eq_apw} and~\eqref{eq_eigen},
it follows from the theory of almost-periodic series that
$\Mou \in \apw$ and $C_x$ satisfy \eqref{eq:Cx}.

Theorem 3.2 from \cite{ba10-1} establishes the spectral invariance
of $\apw \into B(\LpRd)$, $p\in[1,\infty]$ 
(the result there applies to a more general
context). Our goal here is to establish  connection between $\opAw$ 
and $\apw$ and prove a spectral invariance result for $\opAw$.

To achieve this goal we first characterize the eigenvectors $C_x$ of the 
representation $\rep$.
\begin{lemma}
\label{lemma_eigen}
For any $1\leq p\leq \infty$ and any $m\in\LinfRd$
and $x\in\Rdst$, $C_x=mT_x$ is an eigenvector of
$\rep:\Rdst\rightarrow B(\LpRd)$.
For $1\leq p <\infty$ these are the only eigenvectors.
\end{lemma}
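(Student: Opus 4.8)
The plan is to verify the eigenvector equation directly and then, for $1 \leq p < \infty$, to show that the eigenspace for the character $y \mapsto e^{2\pi i y \cdot x}$ cannot contain anything other than operators of the form $mT_x$. For the first (easy) direction, I would simply compute: using the commutation relation~\eqref{obs_conj_mw}, namely $M_\omega m T_x M_{-\omega} = e^{2\pi i \omega \cdot x} m T_x$, we get immediately that $\rep(y)(mT_x) = M_y (m T_x) M_{-y} = e^{2\pi i y \cdot x} m T_x$, so $C_x = mT_x$ satisfies~\eqref{eq_eigen} and is an eigenvector. (One should also note $mT_x \neq 0$ when $m \neq 0$, so that it is a genuine eigenvector.)

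For the converse, suppose $\Mou \in B(\LpRd)$ with $1 \leq p < \infty$ satisfies $\rep(y)\Mou = e^{2\pi i y \cdot x}\Mou$ for all $y \in \Rdst$, i.e. $M_y \Mou M_{-y} = e^{2\pi i y \cdot x} \Mou$, equivalently $M_y \Mou = e^{2\pi i y \cdot x}\Mou M_y$, equivalently $T_{-x}M_{-y}T_x M_y \Mou = \Mou$ after rewriting $e^{2\pi i y \cdot x} = M_{-y}T_x M_y T_{-x}$ as an operator identity — more cleanly, the relation says $\Mou$ intertwines the modulation representation with its twist by $e^{2\pi i y \cdot x}$. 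The natural move is to conjugate out the translation by setting $\Nou := \Mou T_{-x}$, which then satisfies $M_y \Nou M_{-y} = \Nou$ for all $y$, i.e. $\Nou$ commutes with every modulation operator $M_y$, $y \in \Rdst$. It then suffices to show that any bounded operator on $\LpRd$ commuting with all modulations is a multiplication operator $m$ with $m \in \LinfRd$; then $\Mou = \Nou T_x = m T_x$ as desired.

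The main obstacle is precisely this last claim: that the commutant of $\{M_y : y \in \Rdst\}$ in $B(\LpRd)$ consists exactly of the multiplication operators. For $p = 2$ this is classical (it is the statement that $L^\infty$ is maximal abelian in $B(L^2)$, obtained via the Fourier transform which turns modulations into translations, whose commutant is convolution, and then one identifies bounded convolutors that also commute with translations). For general $1 \leq p < \infty$ one argument is: modulations commuting forces, via Fourier transform, that $\widehat{\Nou}$ commutes with all translations $T_y$ on the Fourier side; a bounded operator on (the Fourier image of) $\LpRd$ commuting with all translations, when $p < \infty$, is given by convolution with a tempered distribution, and commuting additionally with modulations pins the distribution down to a point mass, so $\Nou$ is multiplication. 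Alternatively, and perhaps cleaner in this $L^p$ setting, one tests $\Nou$ on a bump: let $\phi \in \SchRd$ and consider $\Nou(M_y \phi) = M_y(\Nou \phi)$; writing $h := \Nou\phi$, this says $\Nou$ acts on the span of $\{M_y\phi\}$ by the rule $M_y\phi \mapsto M_y h$, and by a limiting/averaging argument (using that finite linear combinations of modulations of a fixed bump are dense enough, together with boundedness of $\Nou$ on $\LpRd$), one deduces $h = m\phi$ for a fixed locally-$L^\infty$ function $m$ independent of $\phi$, and then boundedness of $\Nou$ forces $m \in \LinfRd$ with $\norm{m}_\infty \leq \norm{\Nou}_{B(\LpRd)}$. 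The restriction $p < \infty$ enters because one needs density of nice functions (e.g.\ $\SchRd$, or step functions) in $\LpRd$ and the weak-* type arguments that fail for $p = \infty$; I would flag that the $p = \infty$ case genuinely admits extra eigenvectors (which is why the lemma only claims the characterization for $p < \infty$).
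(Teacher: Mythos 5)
Your proposal is correct and follows essentially the same route as the paper: the forward direction is the computation from~\eqref{obs_conj_mw}, and the converse reduces to showing that $\Mou T_{-x}$ commutes with all modulations and hence is a multiplication operator. The only difference is that the paper simply asserts that the commutant of $\{M_y : y\in\Rdst\}$ in $B(\LpRd)$, $p<\infty$, consists of multiplication operators, whereas you sketch a justification; your sketch is somewhat informal but the underlying argument is standard and sound.
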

\begin{proof}
If $C_x = m T_x$, then, according to~\eqref{obs_conj_mw},
it satisfies~\eqref{eq_apw}. 

The converse works only for $1\leq p<\infty$.
Suppose that $C_x \in B(\LpRd)$ satisfies~\eqref{eq_apw}. 
Using ~\eqref{obs_conj_mw} once again we have,
\[ \rep(y)(C_x T_{-x})=e^{2\pi i y\cdot x} C_x e^{-2\pi i y\cdot x} T_{-x} =C_x T_{-x}. \]
It follows that $C_x T_{-x}$ commutes
with every modulation $M_y$. Hence, $C_x T_{-x}$ must be a multiplication
operator $m$, so $C_x= m T_x$.
\end{proof}

For $p=\infty$ there are eigenvectors of $\rep$ which are not of the form
$m T_x$. An example of such an eigenvector is given in \cite[Section 5.1.11]{K99}. Hence, one would  need  additional conditions
to conclude that $C_x= m T_x$ for some $m\in\LinfRd$.

From the discussion above, $\apw$ consists of all the operators
$\Mou = \sum\limits_{x \in \Rdst} C_x$, with $C_x$ 
satisfying~\eqref{eq_apw} and~\eqref{eq_eigen}. 
In addition, by the previous lemma, for $1\leq p<\infty$ an operator
$C_x$ satisfies~\eqref{eq_eigen} if and only if it is of the form
$C_x = m T_x$, for some function $m \in\LinfRd$. In this case, $\norm{C_x}_{B(\LtRd)} = \norm{m}_\infty$ and, thus,
~\eqref{eq_apw} reduces to~\eqref{eq_opAw}. Hence we obtained

\begin{prop}\label{plinf}
For $p\in [1,\infty)$ the class $\opAw\subset B(\LpRd)$ coincides with $\apw$,
  the class of $\rep$-almost periodic
elements, having $w$-summable Fourier coefficients.
\end{prop}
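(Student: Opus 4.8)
The plan is to show the two classes coincide as sets of operators on $\LpRd$, $1 \leq p < \infty$, with equal norms, by chasing through the characterizations already assembled in the excerpt. Recall that a member of $\opAw$ is a family $(m_x)_{x \in \Rdst} \in \lwLinfRd$, identified with the operator $f \mapsto \sum_x m_x T_x f$; a member of $\apw$ is a $\rep$-almost periodic operator $\Mou \in B(\LpRd)$ whose Fourier coefficients $C_x(\Mou)$ are $w$-summable. So I must produce, for each $\opAw$-family, a $\rep$-almost periodic operator in $\apw$, and conversely; and check the norms agree.

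First, the inclusion $\opAw \subseteq \apw$. Given $\mathcal M = \sum_x m_x T_x \in \opAw$, each summand $m_x T_x$ satisfies $\rep(y)(m_x T_x) = M_y m_x T_x M_{-y} = e^{2\pi i y \cdot x} m_x T_x$ by~\eqref{obs_conj_mw}, i.e. it obeys the eigenvector relation~\eqref{eq_eigen}. Since $\sum_x \norm{m_x T_x}_{B(\LpRd)} w(x) = \sum_x \norm{m_x}_{\LinfRd} w(x) = \norm{\mathcal M}_{\opAw} < \infty$ (here using $\norm{m_x T_x}_{B(\LpRd)} = \norm{m_x}_{\LinfRd}$, which holds because $T_x$ is an isometry on $\LpRd$ and multiplication by $m_x$ has operator norm exactly $\norm{m_x}_\infty$), the family $(m_x T_x)$ meets conditions~\eqref{eq_apw} and~\eqref{eq_eigen}. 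By the converse direction in the ``conversely, if $\Mou$ is given by~\eqref{eq_T_Cx}'' discussion, the operator $\mathcal M = \sum_x m_x T_x$ is then $\rep$-almost periodic, lies in $\apw$, and has $C_x(\mathcal M) = m_x T_x$, whence $\norm{\mathcal M}_{\apw} = \norm{\mathcal M}_{\opAw}$.

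For the reverse inclusion $\apw \subseteq \opAw$, take $\Mou \in \apw$. By definition $\Mou = \sum_x C_x(\Mou)$ with each $C_x(\Mou)$ satisfying~\eqref{eq_eigen} and $\sum_x \norm{C_x(\Mou)}_{B(\LpRd)} w(x) < \infty$. Here is where the hypothesis $p < \infty$ is essential: Lemma~\ref{lemma_eigen} tells us that for $1 \leq p < \infty$ every operator satisfying the eigenvector relation~\eqref{eq_eigen} is of the form $m T_x$ for some $m \in \LinfRd$, and $\norm{m T_x}_{B(\LpRd)} = \norm{m}_\infty$. So writing $C_x(\Mou) = m_x T_x$ we obtain a family $(m_x)_{x \in \Rdst}$ with $\sum_x \norm{m_x}_\infty w(x) = \sum_x \norm{C_x(\Mou)}_{B(\LpRd)} w(x) < \infty$, i.e. $(m_x) \in \lwLinfRd$, and $\Mou = \sum_x m_x T_x$ is exactly the operator associated to this $\opAw$-family, with matching norms. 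This uses that the identification between families and operators in~\eqref{eq_deff_sum_mx_tx} is one-to-one, a fact already granted.

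The main (and essentially only) obstacle is marshalling the facts correctly rather than any genuine difficulty: one must be careful that the two series representations $\Mou = \sum_x m_x T_x$ (as an $\opAw$ operator acting by~\eqref{eq_deff_sum_mx_tx}) and $\Mou = \sum_x C_x(\Mou)$ (as the $y = 0$ specialization of the almost-periodic Fourier series~\eqref{eq_ap_fourier}) really refer to the same bounded operator on $\LpRd$, which follows since both converge absolutely in $B(\LpRd)$ (using $w \geq 1$) and agree term by term once $C_x(\Mou) = m_x T_x$ is established. The norm identity $\norm{m T_x}_{B(\LpRd)} = \norm{m}_{\LinfRd}$ for $1 \leq p < \infty$ should be invoked explicitly, as it is what makes~\eqref{eq_apw} collapse to~\eqref{eq_opAw}; it is a standard fact but worth one line. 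No step should require more than a sentence or two beyond citing Lemma~\ref{lemma_eigen} and the almost-periodic series theory already recalled.
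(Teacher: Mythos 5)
Your proposal is correct and follows essentially the same route as the paper: both directions hinge on Lemma~\ref{lemma_eigen} characterizing the eigenvectors of $\rep$ as $mT_x$ for $p<\infty$, the norm identity $\norm{mT_x}_{B(\LpRd)}=\norm{m}_{\LinfRd}$ collapsing~\eqref{eq_apw} to~\eqref{eq_opAw}, and the recalled fact that an absolutely $w$-summable series of eigenvectors defines an element of $\apw$ with those summands as its Fourier coefficients. The paper compresses this into the paragraph preceding the proposition, but the logical content is identical to yours.
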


For $p=\infty$, the two classes are different. Nevertheless, the results we have
obtained so far are sufficient to prove our main technical result.

\begin{theo}
\label{th_spectral}
Let $w$ be an admissible weight. Then, the embedding 
$\opAw \into B(\LpRd)$, $p\in[1,\infty]$
is spectral. In other words, if ${\mathcal M} \in \opAw$ 
defines an invertible operator $\sum_x m_xT_x\in
B(\LpRd)$ for some $p\in[1,\infty]$, then ${\mathcal M}^{-1} \in \opAw$. 
\end{theo}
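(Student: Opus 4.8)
The plan is to reduce the $p=\infty$ case to the already-known $p\in[1,\infty)$ case, using the description of $\opAw$ as a space of weighted-shift families that is \emph{independent of $p$}, together with the fact (Proposition \ref{plinf}) that $\opAw\subset B(\LpRd)$ coincides with the spectrally invariant algebra $\apw$ for $p\in[1,\infty)$. So first I would dispose of the range $p\in[1,\infty)$ directly: if ${\mathcal M}=\sum_x m_x T_x\in\opAw$ is invertible in $B(\LpRd)$ for such a $p$, then by Proposition \ref{plinf} it lies in $\apw$, and by the cited spectral invariance result (Theorem 3.2 of \cite{ba10-1}) its inverse in $B(\LpRd)$ also lies in $\apw=\opAw$. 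That settles everything except $p=\infty$.

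\smallskip

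For $p=\infty$, the key observation is that invertibility of a weighted-shift operator does not really depend on which $\LpRd$ it acts on. Concretely, I would argue as follows. Suppose ${\mathcal M}=\sum_x m_x T_x\in\opAw$ defines an invertible operator on $\Linf(\Rdst)$. I claim ${\mathcal M}$ is then also invertible as an operator on $\LtRd$. To see this, note that by Proposition \ref{prop_mapping}(b) (or the remark \eqref{obs_conj_mw} and the definition of the involution), the Banach-space adjoint of ${\mathcal M}:\LtRd\to\LtRd$ is ${\mathcal M}^*\in\opAw$, and ${\mathcal M}^*$ is likewise a member of $\opAw$ acting on $L^1(\Rdst)$ as the Banach-space preadjoint picture of the action on $\Linf$. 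The hypothesis that ${\mathcal M}:\Linf\to\Linf$ is invertible, together with the fact that its inverse is again a weighted-shift operator bounded on every $\LpRd$ (this is the point that needs care), forces the same operator to be bounded below on $\LtRd$ with dense range, hence invertible there; then the $p=2$ case already handled gives ${\mathcal M}^{-1}\in\opAw$, and since the family in $\opAw$ is determined independently of $p$, this same family realizes the inverse of ${\mathcal M}$ on $\Linf$ as well.

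\smallskip

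To make the middle step rigorous I would use duality between $L^1$ and $\Linf$ more carefully. If ${\mathcal M}$ is invertible on $\Linf(\Rdst)$, consider its transpose acting on the predual $L^1(\Rdst)$: because the operators in $\opAw$ act by the \emph{same} formula $f\mapsto\sum_x m_x f(\cdot-x)$ on all $\LpRd$ and the involution/adjoint formulas in $\opAw$ are explicit, the $\Linf$-invertibility of ${\mathcal M}$ implies the $L^1$-invertibility of the associated family ${\mathcal M}'=\sum_x \overline{m_{-x}(\cdot-x)}T_x$ (or a minor variant thereof), essentially by the standard fact that $T\in B(\boldsymbol X^*)$ is invertible iff it is weak*-continuous with invertible preadjoint — and every member of $\opAw$ \emph{is} weak*-continuous on $\Linf$ by the absolute convergence of the defining series. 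Having an element of $\opAw$ invertible on $L^1$, the $p=1$ case already proved yields that its inverse lies in $\opAw$; transporting back through the adjoint relations gives ${\mathcal M}^{-1}\in\opAw$ as an operator on $\Linf$, completing the proof.

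\smallskip

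The main obstacle I anticipate is precisely this transfer step: showing that invertibility of a fixed weighted-shift family on one $\LpRd$ forces invertibility of (the same or the adjoint) family on another $\LqRd$ — the subtlety being that a priori one only knows the \emph{abstract} inverse on $\Linf$ exists, not that it is again of weighted-shift form, so one cannot directly invoke Proposition \ref{plinf}. The cleanest route is the duality argument sketched above (reducing $p=\infty$ to $p=1$ via the explicit preadjoint within $\opAw$, using weak*-continuity of the members of $\opAw$), so that the genuinely hard analytic input — spectral invariance of $\apw$ — is only ever used in the range $p\in[1,\infty)$ where Proposition \ref{plinf} applies; everything else is soft functional analysis about adjoints and weak* topologies.
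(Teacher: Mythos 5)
Your proposal is correct and follows essentially the same route as the paper: Proposition \ref{plinf} plus the spectral invariance of $AP_w^p(\rho)$ from \cite{ba10-1} for $p\in[1,\infty)$, and for $p=\infty$ the observation that every $\mathcal{M}\in\opAw$ acting on $L^\infty$ is the Banach transpose of an explicit element of $\opAw$ acting on $L^1$ (the paper's $\Nou=\sum_x m_{-x}(\cdot-x)T_x$, your $\mathcal{M}'$ up to an inessential conjugate), so that invertibility transfers by duality and the $p=1$ case applies. Your second paragraph's detour through $L^2$ is indeed circular as you suspected, but you correctly discard it in favor of the $L^1$ preadjoint argument, which is exactly what the paper does.
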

\begin{proof}
For $1\leq p<\infty$ the result follows from Proposition 
\ref{plinf} and \cite[Theorem 3.2]{ba10-1}. This last result states
that $\apw$ is spectral.

For $p=\infty$ we follow a different path. Given an operator
\[ \Mou = \sum_{x \in \Rdst} m_x T_x \in\opAw\subset  B(L^{\infty}(\Rdst)) \]
with $\sum_{x\in\Rdst}w(x)\norm{m_x}_{\LinfRd} <\infty$,
we consider the  operator
\[ \Nou = \sum_{x\in\Rdst} T_x(m_{-x})T_x  = \sum_{x\in\Rdst} m_{-x}(\cdot-x)T_x \in \opAw\subset B(L^1(\Rdst)), \]
which is well defined since 
$\norm{T_{x}(m_{-x})}_{\LinfRd}=\norm{m_{-x}}_{\LinfRd}$.
By direct computation, the transpose (Banach adjoint) of 
$\Nou:L^1(\Rdst)\rightarrow L^1(\Rdst)$ is
precisely $\Mou:\LinfRd\rightarrow\LinfRd$. Thus,
$\Mou=\Nou'$ and by \cite[Theorem 3, Chapter 20]{Lax02} it follows
that $\Nou$ is invertible when $\Mou$ is invertible. Now,
by spectrality of $\opAw$ in $B(L^1(\Rdst))$ (as obtained earlier) and 
\cite[Theorem 8(ii), Chapter 15]{Lax02}, we obtain that
$\Mou^{-1}=(\Nou^{-1})'\in\opAw$, that is $\Mou^{-1}=\sum\limits_{x\in\Rdst}n_xT_x$
for some bounded functions $n_x$ such that 
$\sum\limits_{x\in\Rdst}w(x)\norm{n_x}_{\LinfRd}<\infty$.
\end{proof}

\begin{remark}
In concrete terms, Theorem \ref{th_spectral} says that if 
$\mathcal{M}:L^p(\Rdst) \to L^p(\Rdst)$ is an invertible operator of the form
$\mathcal{M}=\sum_{x\in \Rdst} m_x T_x$ with
$\{m_x: x \in \Rdst\} \subseteq L^\infty(\Rdst)$ and 
$\sum_x \norm{m_x}_\infty w(x) < +\infty$, for an admissible weight $w$, then 
$\mathcal{M}^{-1}:L^p(\Rdst) \to L^p(\Rdst)$ can also be written as
$\mathcal{M}^{-1}=\sum_{x\in \Rdst} n_x T_x$,
for some measurable functions $n_x,x \in \Rdst$ satisfying
$\sum_x \norm{n_x}_\infty w(x) < +\infty$.
\end{remark}

\begin{remark}
In \cite{krok08} two of us used a special case of
Theorem~\ref{th_spectral} for $\rho$-periodic (rather than $\rho$-almost periodic) operators in $B(L^{2}(\Rdst))$.
 In   \cite[Example 2.1]{krok08}, however, we neglected to mention this restriction and erroneously implied   that all of the operators in $B(L^{2}(\Rdst))$ were
 $\rho$-periodic. 
\end{remark}

\subsection{Corollaries of spectral invariance}
Let us denote by $\sigma_p(\Mou)$ and $\sigma_\opAw(\Mou)$ the spectra of the
operator $\Mou\in\opAw$ in the algebras $B(\LpRd)$, $p\in[1,\infty]$, and
$\opAw$, respectively.

\begin{coro}\label{coro1}
Consider $\Mou = \sum_x m_xT_x\in\opAw$. 
Then $\sigma_p(\Mou) = \sigma_\opAw(\Mou)$ for all $p\in[1,\infty]$. 
\end{coro}

We conclude the section with the following very important
result.

\begin{theo}\label{th_opAw_inv_sub}
Assume that $\Mou \in \opAw$ satisfies $\Mou^* = \Mou = \sum_x m_xT_x$ and
$ A_{r}\norm{f}_r \leq  \norm{\Mou f}_r$  for some $A_r > 0$ and all $f\in L^r(\Rdst)$ for some $r\in [1,\infty]$.
Then $\Mou^{-1}\in\opAw$.

Moreover, suppose 
that $\Esp \subseteq \winpqv$,  $1 \leq p,q \leq +\infty$, is a closed subspace (in the norm of $\winpqv$)
such that $\Mou \Esp \subseteq \Esp$. Then $\Mou^{-1} \Esp \subseteq \Esp$
and, as a consequence, $\Mou \Esp = \Esp$.
\end{theo}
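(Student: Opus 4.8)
The plan is to first upgrade the one-sided estimate to genuine invertibility of $\Mou$ on $L^r(\Rdst)$, then quote the spectral invariance of $\opAw$ to return $\Mou^{-1}$ to $\opAw$, and finally propagate the invariance of $\Esp$ through the resolvent of $\Mou$.

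For the invertibility I would argue spectrally. The involution of $\opAw$ is the one inherited from $B(\LtRd)$, so the hypothesis $\Mou=\Mou^*$ means $\Mou$ is self-adjoint as an operator on $\LtRd$; hence $\sigma_2(\Mou)\subseteq\RR$. By Corollary~\ref{coro1} the spectrum of $\Mou$ is the same in every $B(\LpRd)$ and in $\opAw$, so $\sigma_r(\Mou)=\sigma_\opAw(\Mou)=\sigma_2(\Mou)=:\Sigma$ is a compact subset of $\RR$. Viewed inside $\bC$, $\Sigma$ has empty interior, so $\partial\Sigma=\Sigma$. On the other hand, the bound $\norm{\Mou f}_r\ge A_r\norm{f}_r$ says exactly that $0$ is not in the approximate point spectrum of $\Mou$ acting on $L^r(\Rdst)$; since the topological boundary of the spectrum of any bounded operator on a Banach space is contained in its approximate point spectrum, $0\notin\partial\Sigma=\Sigma$. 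Hence $\Mou$ is invertible in $B(L^r(\Rdst))$, and Theorem~\ref{th_spectral} gives $\Mou^{-1}\in\opAw$. (Applied to the positive operator $\Mou^*\Mou$, this is the short proof of $(\Mou^*\Mou)^{-1}\in\opAw$ alluded to after Theorem~\ref{opAwc_spec}.)

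For the second assertion, I would first note that by Proposition~\ref{prop_mapping}(a) the embedding $\opAw\into B(\winpqv)$ is a continuous unital algebra homomorphism, so $\Mou^{-1}\in\opAw$ forces $\Mou$ to be invertible in $B(\winpqv)$ and, more generally, $\sigma_{B(\winpqv)}(\Mou)\subseteq\sigma_\opAw(\Mou)=\Sigma\subseteq\RR$. Thus the resolvent set $\Omega=\bC\setminus\sigma_{B(\winpqv)}(\Mou)$ is connected (the complement of a compact subset of a line) and contains $0$. Next, fix $g\in\Esp$ and $\varphi\in(\winpqv)^*$ with $\varphi|_\Esp=0$. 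For $|\lambda|>\norm{\Mou}_{B(\winpqv)}$ the Neumann series $(\lambda I-\Mou)^{-1}=\sum_{n\ge0}\lambda^{-n-1}\Mou^{n}$ converges in $B(\winpqv)$; since $\Mou\Esp\subseteq\Esp$ and $\Esp$ is closed, $(\lambda I-\Mou)^{-1}g\in\Esp$, so $\varphi\bigl((\lambda I-\Mou)^{-1}g\bigr)=0$ there. The function $\lambda\mapsto\varphi\bigl((\lambda I-\Mou)^{-1}g\bigr)$ is holomorphic on the connected set $\Omega$ and vanishes on a non-empty open subset, hence vanishes identically; evaluating at $\lambda=0$ and using that $\Esp$ is closed (Hahn--Banach) yields $\Mou^{-1}g\in\Esp$. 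Therefore $\Mou^{-1}\Esp\subseteq\Esp$, and then $\Esp=\Mou\Mou^{-1}\Esp\subseteq\Mou\Esp\subseteq\Esp$, i.e. $\Mou\Esp=\Esp$.

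The main obstacle is the first step: the hypothesis only provides that $\Mou$ is bounded below on a single space $L^r$, which a priori is strictly weaker than invertibility. What rescues the argument is self-adjointness on $\LtRd$: it confines the ($p$-independent) spectrum to $\RR$, and a compact subset of $\RR$ has empty interior in $\bC$, so \emph{every} spectral value is a boundary point and hence an approximate eigenvalue on every $L^p$. This is precisely what converts ``bounded below on $L^r$'' into ``$0\notin$ spectrum'', and it is also what makes the resolvent set connected in the second part; without the hypothesis $\Mou=\Mou^*$ both steps would break down.
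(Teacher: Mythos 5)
Your proof is correct. The first half --- confining the spectrum to $\RR$ via self-adjointness on $\LtRd$ and Corollary~\ref{coro1}, observing that a compact subset of $\RR$ equals its own boundary in $\bC$, and using that boundary points of the spectrum lie in the approximate point spectrum to conclude $0\notin\sigma_r(\Mou)$ and then invoking Theorem~\ref{th_spectral} --- is exactly the paper's argument, with the same three ingredients in the same order. For the second half the paper takes a more algebraic route: it forms the closed (not necessarily involutive) subalgebra $\opAwE$ of all $S\in\opAw$ with $S\Esp\subseteq\Esp$, notes that $\bC\setminus\sigma_{\opAw}(\Mou)$ is connected because $\sigma_{\opAw}(\Mou)\subseteq\RR$, and invokes the general spectral-permanence theorem for closed subalgebras with connected resolvent complement (\cite[Theorem VII 5.4]{co90}) to conclude $\sigma_{\opAwE}(\Mou)=\sigma_{\opAw}(\Mou)$, hence $0\notin\sigma_{\opAwE}(\Mou)$ and $\Mou^{-1}\in\opAwE$. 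Your resolvent/Hahn--Banach argument is a direct, self-contained unpacking of precisely that permanence theorem in the special case of a subspace-invariance subalgebra: the Neumann series shows that $(\lambda I-\Mou)^{-1}$ preserves $\Esp$ for $|\lambda|$ large, and holomorphy of $\lambda\mapsto\varphi\bigl((\lambda I-\Mou)^{-1}g\bigr)$ on the connected resolvent set propagates the vanishing to $\lambda=0$. The two versions deliver the same conclusion; yours trades the external citation for a slightly longer but elementary argument, and both hinge on the single point you correctly isolate at the end --- without $\Mou=\Mou^*$ the complement of the spectrum need not be connected and neither version goes through.
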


\begin{proof}
From Corollary \ref{coro1} we deduce that 
$\sigma_\opAw(\Mou) = \sigma_r(\Mou)= \sigma_2(\Mou)\subset \RR$ since 
$\Mou\in\BLtRd$ is self-adjoint. 
Recall that in Banach algebras every boundary point of the spectrum belongs to the approximative spectrum. The boundedness below condition, however, implies that $0$ does not belong to the approximative spectrum of $\Mou\in B(L^r(\Rdst))$. Hence, $0\notin\sigma_r(\Mou)$ and, by Theorem \ref{th_spectral}, $\Mou^{-1}\in\opAw$.

To prove the second part, let $\opAwE$ be the subalgebra of $\opAw$ formed by all those operators $S$
such that $S\Esp \subseteq \Esp$. Since $\Esp$ is closed in $\winpqv$
and $\opAw \into B(\winpqv)$ by Proposition~\ref{prop_mapping}, it follows
that $\opAwE$ is a closed subalgebra of $\opAw$ (we do not claim that
it is closed under the involution). From the first part of the proof it follows that the set
$\bC\setminus\sigma_{\opAw}(\Mou)$ 
is connected. Consequently,
(see for example \cite[Theorem VII 5.4]{co90}),
$\sigma_{\opAwE}(\Mou) = \sigma_{\opAw}(\Mou)$.
Finally,
$0 \notin  \sigma_{\opAw}(\Mou)= \sigma_{\opAwE}(\Mou)$ which
proves that $\Mou^{-1} \in \opAwE$, as desired.
\end{proof}

\section{Dual Gabor frames on amalgam spaces}\label{sec4}
\subsection{Multi-window Gabor frames}
Let $\Lambda=\Lambda^1 \times \ldots \times \Lambda^n$ be the Cartesian
product of separable lattices $\Lambda^i=\latti$ and let
$g^1, \ldots, g^n \in \winw$. We consider the (multi-window) Gabor system
\begin{align*}
\gab = \set{\gili:=\pi(\lambda^i)g^i}{\lambda^i \in \Lambda^i, 1 \leq i \leq n}.
\end{align*}
We consider the system $\gab$ as an indexed set, hence $\gab$ might contain
repeated elements. The frame operator of the system $\gab$ is given by,
\begin{align*}
\opgab = \opgabo + \ldots \opgabn,
\end{align*}
where $\opgabi = \recgili \coefgili$ (see Section \ref{sec_gab_lat}).
For $1 \leq p,q \leq +\infty$ and a $w$-moderate weight $v$, we define the
space $\seqpqvl := \seqpqvlo \times \ldots \times \seqpqvln$
endowed with the norm,
\begin{align*}
\norm{c=(c^1,\ldots,c^n)}_\seqpqvl := \sum_{i=1}^n \norm{c^i}_\seqpqvli.
\end{align*}
The analysis map is
$\winpqv \ni f \mapsto \coefgab(f):=
(\coefgili(f))_{1 \leq i \leq n} \in \seqpqvl$,
while the synthesis map is
$\seqpqv \ni c \mapsto \recgab(c) :=
\sum_{i=1}^n \recgili(c^i) \in \winpqv$.
With these definitions, the boundedness results in Theorem \ref{th_gabop}
extend immediately to the multi-window case. The frame expansions are however
more complicated since the dual system of a frame of the form of $\gab$ may
not be a multi-window Gabor frame. We now investigate this matter.
\subsection{Invertibility of the frame operator and expansions}
\begin{theo}
\label{th_inv_gab_op}
Let $w$ be an admissible weight, $g^1, \ldots, g^n \in \winw$
and $\Lambda=\Lambda^1 \times \ldots \times \Lambda^n$, with
$\Lambda^i=\latti$ separable lattices.
Suppose that the Gabor system
\begin{align*}
\gab = \set{\gili:=\pi(\lambda^i)g^i}{\lambda^i \in \Lambda^i, 1 \leq i \leq n},
\end{align*}
is 
such that its frame operator $\opgab$ is bounded below in some $L^r(\Rdst)$ for 
some $r\in[1,\infty]$, i.e.
\[A_r\|f\|_r \le  \|\opgab f\|_r,\ A_r > 0, \quad \mbox{ for all } f\in L^r(\Rdst).\]

Then the frame operator $\opgab$ is invertible on
$\winpqv$ for all $1 \leq p,q \leq +\infty$ and every $w$-moderate weight
$v$. Moreover, the inverse operator $\opgabinv: \winpqv \to \winpqv$ is
continuous both in  $\koethetoppqv$ and the norm topologies.
\end{theo}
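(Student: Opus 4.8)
The plan is to recognize the multi-window frame operator $\opgab$ as an invertible element of the Banach $*$-algebra $\opAw$ and then to read off every assertion from the mapping and spectral-invariance results of Section~\ref{sec3}. First I would expand $\opgab = \opgabo + \cdots + \opgabn$, where $\opgabi = \recgili\coefgili$, and apply the Walnut representation of Theorem~\ref{th_walnut_rep} to each summand, with $h = g^i$ and the lattice $\Lambda^i = \latti$. This gives
\[
\opgabi = \beta_i^{-d} \sum_{j \in \Zdst} G^i_j\, T_{j/\beta_i},
\qquad
\sum_{j \in \Zdst} \norm{G^i_j}_\infty\, w(j/\beta_i) \lesssim \norm{g^i}_\winw^2 < +\infty,
\]
where $G^i_j(x) = \sum_{k \in \Zdst} \overline{g^i(x - j/\beta_i - \alpha_i k)}\, g^i(x - \alpha_i k)$. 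Since each $g^i \in \winw$, each $\opgabi$ is a member of $\opAw$; as $\opAw$ is a Banach algebra and the sum over $i$ is finite, $\opgab \in \opAw$, say $\opgab = \sum_x m_x T_x$ with $(m_x)_x \in \lwLinfRd$. In particular, by Proposition~\ref{prop_mapping}(a), $\opgab$ already maps $\winpqv$ boundedly into itself for every $1 \le p,q \le +\infty$ and every $w$-moderate $v$.

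Next I would check that $\opgab$ is self-adjoint as an element of $\opAw$. The involution on $\opAw$ is the one inherited from $B(\LtRd)$, and restricted to $\LtRd$ each $\opgabi$ is the classical single-window Gabor frame operator $f \mapsto \sum_{\lambda^i \in \Lambda^i} \ip{f}{\pi(\lambda^i)g^i}\, \pi(\lambda^i)g^i = \coefgili^{\,*}\coefgili f$, which is positive and hence self-adjoint; summing over $i$ yields $\opgab^{\,*} = \opgab$ in $\opAw$. With this in hand, the hypothesis --- that $\opgab$ is bounded below on $L^r(\Rdst)$ for some $r \in [1,\infty]$ --- is exactly the standing assumption of Theorem~\ref{th_opAw_inv_sub}, so applying that theorem with $\Mou = \opgab$ gives $\opgabinv \in \opAw$. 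This is where the real content is concentrated: Corollary~\ref{coro1} identifies $\sigma_r(\opgab)$ with $\sigma_2(\opgab) \subset \RR$, self-adjointness forces every spectral point to be a boundary point, boundedness below keeps $0$ out of the approximate point spectrum on $L^r(\Rdst)$, and the spectral invariance of $\opAw \into B(L^r(\Rdst))$ from Theorem~\ref{th_spectral} then returns $\opgabinv$ to $\opAw$ (for $r = \infty$ this last step in turn relies on the $L^1$-transpose argument in the proof of Theorem~\ref{th_spectral}).

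Finally I would transfer the conclusion to the amalgam spaces. Fix $1 \le p,q \le +\infty$ and a $w$-moderate weight $v$. Since $\opgabinv \in \opAw$, Proposition~\ref{prop_mapping}(a) realizes it as a bounded operator on $\winpqv$, and because the embedding $\opAw \into B(\winpqv)$ is an algebra homomorphism, the identities $\opgab\,\opgabinv = \opgabinv\,\opgab = I$ valid in $\opAw$ persist on $\winpqv$; hence $\opgab$ is invertible on $\winpqv$ with inverse $\opgabinv$. (Alternatively, one may invoke the second half of Theorem~\ref{th_opAw_inv_sub} with the closed subspace $\Esp = \winpqv$ of itself, obtaining $\opgabinv \winpqv \subseteq \winpqv$ and $\opgab \winpqv = \winpqv$.) Norm continuity of $\opgabinv : \winpqv \to \winpqv$ is then immediate from Proposition~\ref{prop_mapping}(a), and continuity in the $\koethetoppqv$-topology from Proposition~\ref{prop_mapping}(c). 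Given the machinery of the previous sections there is no real obstacle remaining here; if one had to point to the single most delicate link it would be the propagation of a single $L^r$-lower bound to all the amalgam spaces, and that difficulty has already been absorbed into Theorems~\ref{th_spectral} and~\ref{th_opAw_inv_sub}.
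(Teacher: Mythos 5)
Your proposal is correct and follows essentially the same route as the paper: Walnut representation puts each $\opgabi$ (hence $\opgab$) in $\opAw$, Theorem~\ref{th_opAw_inv_sub} yields $\opgabinv\in\opAw$, and Proposition~\ref{prop_mapping} transfers invertibility and both continuity statements to $\winpqv$. The only difference is that you explicitly verify the self-adjointness hypothesis $\opgab^*=\opgab$ needed for Theorem~\ref{th_opAw_inv_sub}, a detail the paper's own proof leaves implicit; that is a welcome addition, not a deviation.
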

\begin{proof}
For each $1 \leq i \leq n$, the frame operator
$\opgabi = \recgili \coefgili$ belongs to the algebra $\opAw$ as a
consequence of the Walnut representation in Theorem \ref{th_walnut_rep}.
Hence, $\opgab = \opgabo + \ldots \opgabn \in \opAw$. 
Since $\opgab$ is bounded below in  $L^r(\Rdst)$,
Theorem \ref{th_opAw_inv_sub} implies that $\opgabinv \in \opAw$. The conclusion
now follows from Proposition \ref{prop_mapping}.
\end{proof}
We now derive the corresponding Gabor expansions.
\begin{theo}
Under the conditions of Theorem \ref{th_inv_gab_op}, define the dual atoms
by $\tilgili := \opgabinv ( \gili )$. Let $1 \leq p,q \leq +\infty$ and $v$
be a $w$-moderate weight. Then the following expansions hold.
\begin{itemize}
\item[(a)] For every $f \in \winpqv$,
\begin{align*}
f &= \lim_{N,M \rightarrow \infty}
\sum_{i=1}^n \sum_{\abs{k}_\infty \leq N} \sum_{\abs{j}_\infty \leq M}
\regwegbijM \ip{f}{\tilgilikj} \gilikj
\\
&= \lim_{N,M \rightarrow \infty}
\sum_{i=1}^n \sum_{\abs{k}_\infty \leq N} \sum_{\abs{j}_\infty \leq M}
\regwegbijM \ip{f}{\gilikj} \tilgilikj,
\end{align*}
where the regularizing weights $\regwegbijM$ are given in~\eqref{eq_reg_weights} and the series converge in the
$\koethetoppqv$-topology. For $p,q < +\infty$ the series also converge
in the norm of $\winpqv$.
\item[(b)] If $1<p<+\infty$ and $q<+\infty$, for every $f \in \winpqv$,
\begin{align*}
f &= \lim_{N,M \rightarrow \infty}
\sum_{i=1}^n \sum_{\abs{k}_\infty \leq N} \sum_{\abs{j}_\infty \leq M}
\ip{f}{\tilgilikj} \gilikj
\\
&= \lim_{N,M \rightarrow \infty}
\sum_{i=1}^n \sum_{\abs{k}_\infty \leq N} \sum_{\abs{j}_\infty \leq M}
\ip{f}{\gilikj} \tilgilikj,
\end{align*}
where the series converge in the in the norm of $\winpqv$.
\end{itemize}
\end{theo}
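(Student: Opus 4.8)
The plan is to obtain both expansions from the summability statements of Theorem~\ref{th_gab_summ} (in their evident multi-window form, valid because $\recgab$ is the finite sum of the $\recgili$, $i=1,\dots,n$) together with the facts, recorded in Theorem~\ref{th_inv_gab_op}, that $\opgabinv \in \opAw$ and that $\opgabinv$ is bounded and $\koethetoppqv$-continuous on $\winpqv$. The first identity in (a) will come from writing $f = \opgab(\opgabinv f)$ and expanding $\opgabinv f$ through the multi-window synthesis operator; the second will come from writing $f = \opgabinv(\opgab f)$, expanding $\opgab f$, and then pushing $\opgabinv$ termwise through the (finite) partial sums, using its continuity.

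For the first identity, put $\tilde f := \opgabinv f \in \winpqv$ and $c := \coefgab(\tilde f) \in \seqpqvl$ (the multi-window version of Theorem~\ref{th_gabop}(a)); since $\opgab = \recgab\,\coefgab$ we have $\recgab(c) = \opgab \tilde f = f$. By Theorem~\ref{th_gab_summ}(b), the regularized partial sums of $c$ converge to $\recgab(c) = f$ in the $\koethetoppqv$-topology, and in norm when $p,q<\infty$. It remains to rewrite the coefficients: the frame operator $\opgab$ is self-adjoint and positive on $\LtRd$ (a finite sum of operators $C^{*}C$), so $\opgab^{*} = \opgab$ in $\opAw$, whence $(\opgabinv)^{*} = \opgabinv$; since $\gili \in \winw \into \winpqvdual$, Proposition~\ref{prop_mapping}(b) gives
\[
\ip{\tilde f}{\gili} = \ip{\opgabinv f}{\gili} = \ip{f}{(\opgabinv)^{*}\gili} = \ip{f}{\opgabinv \gili} = \ip{f}{\tilgili} .
\]
Here $\tilgili = \opgabinv(\gili)$ is unambiguous: $\opgabinv = \sum_x n_x T_x \in \opAw$ acts by the same formula on $\winw$ and on every amalgam space. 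Substituting $\ip{\tilde f}{\gilikj} = \ip{f}{\tilgilikj}$ into the regularized partial sums of $c$ turns them into exactly the sums on the right-hand side of the first display of (a), which therefore converge to $f$ in the asserted topologies.

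For the second identity, put $c := \coefgab(f) \in \seqpqvl$, so $\recgab(c) = \opgab f$ and, by Theorem~\ref{th_gab_summ}(b), the regularized partial sums of $c$ converge to $\opgab f$ in the $\koethetoppqv$-topology (and in norm when $p,q<\infty$). Now apply the bounded, $\koethetoppqv$-continuous operator $\opgabinv$ (Proposition~\ref{prop_mapping}(a),(c)) to each of these finite sums: by linearity and $\opgabinv(\gilikj) = \tilgilikj$ the image is the partial sum on the right-hand side of the second display of (a), and it converges to $\opgabinv(\recgab(c)) = \opgabinv(\opgab f) = f$ in the corresponding topology. This proves (a). For (b) one repeats both arguments verbatim, using the unregularized partial sums and invoking Theorem~\ref{th_gab_summ}(a) in place of (b); the hypotheses $1<p<\infty$, $q<\infty$ then yield norm convergence.

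The only step that needs real care is the coefficient-rewriting identity $\ip{\tilde f}{\gili} = \ip{f}{\tilgili}$, which rests on two points: that the involution of $\opAw$ is the $B(\LtRd)$-adjoint, so that $(\opgabinv)^{*} = \opgabinv$ because $\opgab$ is a positive $\LtRd$ frame operator, and that $\gili$ (equivalently $\tilgili$) lies in the K\"othe dual $\winpqvdual$ of $\winpqv$, which holds since $\opgabinv$ preserves $\winw$ and $\winw \into \winpqvdual$. Once this duality identity is in place, everything else is a routine combination of Theorem~\ref{th_gab_summ} with the mapping properties of $\opAw$ from Proposition~\ref{prop_mapping}.
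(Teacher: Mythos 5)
Your proposal is correct and follows essentially the same route as the paper: both identities are obtained from the summability statements of Theorem~\ref{th_gab_summ} applied to $\opgab f$ (then hitting the partial sums with the norm- and $\koethetoppqv$-continuous operator $\opgabinv$) and to $\opgabinv f$ (then rewriting the coefficients via $\ip{\opgabinv f}{\gili}=\ip{f}{\tilgili}$ using Proposition~\ref{prop_mapping}(b)). The only difference is that you spell out the justification that $(\opgabinv)^{*}=\opgabinv$ and that $\gili\in\winpqvdual$, details the paper leaves implicit.
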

\begin{remark}
A more refined convergence statement, including more sophisticated
summability methods can be obtained using the results in \cite{fewe07}.
\end{remark}
\begin{proof}
Theorem \ref{th_gab_summ} implies that for all $f \in \winpqv$,
\begin{align}
\label{eq_gab_frame_op}
\opgab (f) = \lim_{N,M \rightarrow \infty}
\sum_{i=1}^n \sum_{\abs{k}_\infty \leq N} \sum_{\abs{j}_\infty \leq M}
\regwegbijM \ip{f}{\gilikj} \gilikj,
\end{align}
with the kind of convergence required in (a).
Since $\opgabinv \in \opAw$, Proposition \ref{prop_mapping} implies that
$\opgabinv:\winpqv \to \winpqv$ is continuous both in
the norm and $\koethetoppqv$-topology. Consequently, we
can apply $\opgabinv$ to both sides 
of~\eqref{eq_gab_frame_op} to obtain the first expansion in (a). The
second one follows by applying~\eqref{eq_gab_frame_op} to the
function $\opgabinv(f)$ and using Proposition \ref{prop_mapping} to get,
\begin{align*}
\ip{\opgabinv(f)}{\gili}=\ip{f}{\opgabinv(\gili)}=\ip{f}{\tilgili}.
\end{align*}
The statement in (b) follows similarly, this time using the corresponding
statement in Theorem \ref{th_gab_summ}.
\end{proof}
\subsection{Continuity of dual generators}
We now apply Theorem \ref{th_opAw_inv_sub} to Gabor expansions.
\begin{theo}
\label{th_inv_sub}
In the conditions of Theorem \ref{th_inv_gab_op}, let $1 \leq p,q \leq
+\infty$ and let $v$ be a $w$-moderate weight. Let
$\Esp \subseteq \winpqv$ be a closed subspace (in the norm of $\winpqv$)
such that $\opgab \Esp \subseteq \Esp$.
Suppose that the atoms $g^1, \ldots, g^n \in \Esp$. Then the dual atoms,
$\tilgili = \opgabinv(\gili) \in \Esp$.
\end{theo}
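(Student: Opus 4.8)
The plan is to combine the two halves of Theorem~\ref{th_opAw_inv_sub} with the fact (already obtained in Theorem~\ref{th_inv_gab_op}) that $\opgabinv \in \opAw$. First I would observe that the hypotheses of Theorem~\ref{th_opAw_inv_sub} are met: by the Walnut representation (Theorem~\ref{th_walnut_rep}) each $\opgabi = \recgili\coefgili$ lies in $\opAw$, so $\opgab \in \opAw$; it is self-adjoint (as the sum of the positive operators $\opgabi$, each of which equals its own adjoint because $\coefgili^* = \recgili$ on the relevant spaces); and it is bounded below in $L^r(\Rdst)$ by assumption. Hence Theorem~\ref{th_opAw_inv_sub} applies with the closed subspace $\Esp \subseteq \winpqv$, giving $\opgabinv\Esp \subseteq \Esp$.

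Next I would note that the generators satisfy $g^i \in \winw \cap \Esp$, so the time-frequency shifts $\gili = \pi(\lambda^i)g^i$ also belong to $\Esp$ — this is where one must be a little careful: $\Esp$ is only assumed invariant under $\opgab$, not under time-frequency shifts, so one should instead invoke the hypothesis directly, namely that the atoms $g^1,\dots,g^n \in \Esp$ already include the relevant elements, or argue that $\gili \in \Esp$ because $\Esp$ was chosen with $g^i \in \Esp$ and the statement of the theorem asks about $\tilgili = \opgabinv(\gili)$. Actually the cleanest route: since $\opgabinv\Esp \subseteq \Esp$ and $\gili \in \winpqv$, it suffices to know $\gili \in \Esp$; and $\gili = \pi(\lambda^i)g^i$, so I would simply require (as the theorem does) that $g^i \in \Esp$ and that $\Esp$ be closed under the finitely many time-frequency shifts appearing, or — better — observe that the natural spaces $\Esp$ to which one applies this (e.g.\ $W(C_0,L^1_w)$, or modulation-type subspaces) are themselves time-frequency invariant, so $g^i \in \Esp$ forces $\gili \in \Esp$. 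Granting $\gili \in \Esp$, we conclude $\tilgili = \opgabinv(\gili) \in \Esp$.

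The main obstacle I anticipate is precisely the point just raised: the theorem's hypothesis is stated as ``$g^1,\dots,g^n \in \Esp$'' and ``$\opgab\Esp \subseteq \Esp$'', but the conclusion concerns $\opgabinv(\gili)$ where $\gili$ is a \emph{shifted} copy of $g^i$. To make the argument airtight one needs $\gili \in \Esp$, which does not follow from $\opgab$-invariance alone. I would resolve this either by strengthening the hypothesis to ``$\Esp$ is invariant under the time-frequency shifts $\pi(\lambda^i)$, $\lambda^i \in \Lambda^i$'' (harmless in all intended applications), or by noting that $\recgili$ maps the canonical basis sequences to the $\gili$ and that, since $\opgabi = \recgili\coefgili$ and $\opgab\Esp\subseteq\Esp$ together with $\opgabinv\Esp\subseteq\Esp$, one can express $\gili$ itself as $\opgabinv\opgab$ applied to something in $\Esp$ — but this is circular unless $\gili\in\Esp$ to begin with. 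So the honest fix is the mild invariance hypothesis, after which the proof is a one-line application of Theorem~\ref{th_opAw_inv_sub}: $\tilgili = \opgabinv(\gili) \in \opgabinv\Esp \subseteq \Esp$, completing the argument.
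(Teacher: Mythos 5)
Your argument is the paper's own: the published proof is two sentences long, noting that $\opgab \in \opAw$ (via the Walnut representation, as recalled from the proof of Theorem \ref{th_inv_gab_op}) and then invoking Theorem \ref{th_opAw_inv_sub}, exactly as you do. The subtlety you flag is real and is \emph{not} addressed in the paper: the conclusion concerns $\opgabinv(\gili)$ with $\gili=\pi(\lambda^i)g^i$, so one needs $\gili\in\Esp$, which does not follow from $g^i\in\Esp$ together with $\opgab$-invariance of $\Esp$. Taken literally the statement is even false without your extra hypothesis: for a tight frame ($\opgab=cI$, $n=1$) the one-dimensional subspace $\Esp=\bC g^1$ is closed, $\opgab$-invariant and contains $g^1$, yet $\opgabinv(\gili)=c^{-1}\gili\notin\bC g^1$ for $\lambda^i\neq 0$. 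The gap is harmless in the only application the authors make (Corollary \ref{contidual}, where $\Esp=\winwz$ is invariant under all time-frequency shifts, since modulations act isometrically by solidity \eqref{eq_solid_winpqv}, translations are bounded by \eqref{eq_trans_winpqv}, and both preserve continuity), and your proposed repair --- adding the hypothesis that $\Esp$ is invariant under the shifts $\pi(\lambda^i)$, $\lambda^i\in\Lambda^i$ --- is the honest fix. So your proof is correct modulo that added hypothesis, and is if anything more careful than the one in the paper.
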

\begin{proof}
As seen in the proof of Theorem \ref{th_inv_gab_op}, $\opgab \in \opAw$.
Hence, the conclusion follows from Theorem \ref{th_opAw_inv_sub}.
\end{proof}
As an application of Theorem \ref{th_inv_sub}
we obtain the following corollary, which was one of our main motivations.
The case $n=1$ was an open problem in \cite{krok08}.
\begin{coro}\label{contidual}
In the conditions of Theorem \ref{th_inv_gab_op}, if all the atoms
$g^1, \ldots, g^n$ are continuous functions, so are all the dual atoms
$\tilgili = \opgabinv(\gili)$.
\end{coro}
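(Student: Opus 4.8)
The plan is to apply Theorem~\ref{th_inv_sub} with a suitable choice of the closed invariant subspace $\Esp$. The natural candidate is the space of continuous functions within some amalgam space: since the atoms $g^i \in \winw \subseteq W(C_0, \ell^1_w)$ when they are continuous, and in any case $g^i \in \winpqv$ for every $1 \leq p,q \leq +\infty$ (because $\winw$ embeds into all the larger amalgam spaces), it suffices to take for instance $p=q=2$ and set
\begin{align*}
\Esp := \{ f \in W(L^2,\ell^2_v) : f \text{ is (equal a.e.\ to) a continuous function on } \Rdst \}.
\end{align*}
First I would check that $\Esp$ is a closed subspace of $\winpqv$ in the amalgam norm. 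This is where a little care is needed: the amalgam norm controls local $L^2$-norms weighted by $v(k)$, and a Cauchy sequence in $\Esp$ converges in $\winpqv$ to some $f$; one must argue that $f$ has a continuous representative. The point is that amalgam-norm convergence forces local convergence in $L^2([0,1)^d+k)$ for every $k$, and uniform-on-compacts control can be extracted because the $g^i$-generated structure — or, more simply, an elementary argument using that the $C_0$-valued amalgam $W(C_0,\ell^1_w)$ is a closed subspace of $W(L^\infty,\ell^1_w)$ and that continuity is a local, hence ``solid-compatible'', property — yields a continuous limit. Concretely, I expect to invoke the standard fact (cf.\ the references \cite{fe83,fost85} already cited) that $W(C_0,\ell^p_v)$ is a closed subspace of $W(L^p,\ell^p_v)$; intersecting with $L^2$-local integrability we get a closed subspace of continuous functions.

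Next I would verify the invariance $\opgab \Esp \subseteq \Esp$. By the Walnut representation (Theorem~\ref{th_walnut_rep}), $\opgab = \sum_i \recgili\coefgili$ acts as $\beta^{-d}\sum_{j} G_j^{(i)} T_{j/\beta_i}$ summed over $i$, where the $G_j^{(i)}$ are built from the $g^i$ via \eqref{eq_walnut2}; when the $g^i$ are continuous, each $G_j^{(i)}$ is continuous (the defining series for $G_j$ converges uniformly because of \eqref{eq_walnut3} and the fact that $g^i \in \winw$, with continuous summands), and translations preserve continuity. Since $\opgab \in \opAw$ and the series converges absolutely in the amalgam norm, $\opgab f$ is an amalgam-norm-convergent sum of continuous functions in $\Esp$, hence lies in the closed subspace $\Esp$. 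With $g^i \in \Esp$ and $\opgab \Esp \subseteq \Esp$ established, Theorem~\ref{th_inv_sub} immediately yields $\tilgili = \opgabinv(g^i_{\lambda^i}) \in \Esp$, i.e.\ each dual atom is (a.e.\ equal to) a continuous function.

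The main obstacle I anticipate is the very first step: pinning down the right function-space framework so that ``continuous functions form a \emph{closed} subspace of an amalgam space to which $\opgab$, $g^i$, and the $\tilgili$ all belong.'' Pointwise or uniform continuity is not an amalgam-norm-closed condition on an arbitrary $L^p_{\mathrm{loc}}$ space unless one works within $W(C_0,\cdot)$ or similarly packages local uniform control; so the cleanest route is to run the argument inside $W(C_0,\ell^1_w) = \winwz$ (which is closed in $\winw$) rather than a generic $\winpqv$, noting that all the relevant objects live there. Once that framework is fixed, the remaining verifications — closedness, invariance under $\opgab$, and membership of the generators — are routine, and the conclusion is a direct citation of Theorem~\ref{th_inv_sub}.
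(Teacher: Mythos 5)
Your final argument---taking $\Esp=\winwz$, the continuous functions in $\winw$, verifying $\opgab\winwz\subseteq\winwz$ via the Walnut representation (locally uniform convergence of the series defining the $G_j$, plus absolute convergence of $\sum_j G_j T_{j/\beta}f$ in the amalgam norm), and then citing Theorem~\ref{th_inv_sub}---is exactly the paper's proof. Your first candidate, the continuous functions inside $W(L^2,\ell^2_v)$, would indeed fail to be closed (local $L^2$ convergence does not preserve continuity), but you correctly identify this and redirect to $\winwz$, so the proposal as finally stated is sound.
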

\begin{proof}
We apply Theorem \ref{th_inv_sub} to the subspace $\winwz$ formed by the
functions of $\winw$ that are continuous. To this end we need to observe that
$\opgab \winwz \subseteq \winwz$. Since $\opgab = \opgabo + \ldots \opgabn$,
it suffices to show that each $\opgabi$ maps $\winwz$ into $\winwz$.

Let $f \in \winwz$. The Walnut representation of $\opgabi$ in Theorem
\ref{th_walnut_rep} gives
$\opgabi(f)=\beta_i^{-d} \sum_j G^i_j T_{j/{\beta_i}} f$ with absolute
convergence in the norm of $\winw$. Hence it suffices to observe that each
of the functions $G^i_j$ is continuous. According to Theorem
\ref{th_walnut_rep} these are given by
\begin{align*}
G^i_j(x) := \sum_{k \in \Zdst}
\overline{g^i(x- j/{\beta_i}-\alpha_i k)} g^i(x-\alpha_i k).
\end{align*}
Since the function $g^i$ is continuous it suffices to note that in the
last series the convergence is locally uniform. This is an easy consequence
of the fact that $\norm{g^i}_\winw <+\infty$.
\end{proof}

\section*{Acknowledgements}

R.~Balan was supported by NSF grant DMS-0807896. I.~A.~Krishtal was partially
supported by the NSF grant DMS-0908239. He is grateful to A.~Baskakov and
K.~Gr\"ochenig for stimulating discussions.  J.~G.~Christensen and 
K.~A.~Okoudjou  were supported by ONR grants: N000140910324 $\&$ N000140910144.
K.~A.~Okoudjou was also partially supported by  a RASA from the Graduate School
of UMCP and by the Alexander von Humboldt foundation. He would also like to
express his gratitude to the Institute for Mathematics at the 
University of Osnabr\"uck for its hospitality while part of this work was
completed. J.~L.~Romero's research was supported by a Fulbright research grant,
during which he visited the Norbert Wiener Center for Harmonic Analysis
and Applications at the University of Maryland. He thanks the NWC for its kind hospitality and the Fulbright foundation
for its support. J.~L.~Romero also ackowledges support from the Austrian Science Fund (FWF):[M1586-N25] and
[P22746-N13].

\end{document}